\theoremstyle{definition}
\newtheorem{defn}{Definition}
\theoremstyle{plain} 
\newtheorem{thm}{Theorem}
\newtheorem{cor}{Corollary}
\newtheorem{prop}{Proposition}
\newtheorem{lem}{Lemma}
\theoremstyle{remark} 
\newtheorem{remark}{Remark}
\newcommand{\prob}{\mathsf{P}}
\newcommand{\unif}{{\sf Unif}}
\newcommand{\gam}{{\sf Gamma}}
\newcommand{\RR}{\mathbb{R}}
\newcommand{\YY}{\mathbb{Y}}
\newcommand{\J}{\mathcal{J}}
\newcommand{\I}{\mathcal{I}}
\newcommand{\veta}{\boldsymbol{\veta}}
\renewcommand{\phi}{\varphi} 
\newcommand{\eps}{\varepsilon}
\DeclareMathAlphabet      {\mathbfit}{OML}{cmm}{b}{it}
\title{On optimal designs for non-regular models}
\author{Yi Lin\footnote{Department of Mathematics, Statistics, and Computer Science, University of Illinois at Chicago. Email: {\tt ylin46@uic.edu}, {\tt myang2@uic.edu}} \qquad Ryan Martin\footnote{Department of Statistics, North Carolina State University. Email: {\tt rgmarti3@ncsu.edu}} \qquad Min Yang$^*$}
\date{\today}
\begin{document}

\maketitle

\begin{abstract}
Classically, Fisher information is the relevant object in defining optimal experimental designs.  However, for models that lack certain regularity, the Fisher information does not exist and, hence, there is no notion of design optimality available in the literature.  This article seeks to fill the gap by proposing a so-called {\em Hellinger information}, which generalizes Fisher information in the sense that the two measures agree in regular problems, but the former also exists for certain types of non-regular problems.  We derive a Hellinger information inequality, showing that Hellinger information defines a lower bound on the local minimax risk of estimators.  This provides a connection between features of the underlying model---in particular, the design---and the performance of estimators, motivating the use of this new Hellinger information 
for non-regular optimal design problems.  Hellinger optimal designs are derived for several non-regular regression problems, with numerical results empirically demonstrating the efficiency of these designs compared to alternatives.   

\smallskip

\emph{Keywords and phrases:} E-optimality; experimental design; Fisher information; Hellinger distance; information inequality. 
\end{abstract}

\section{Introduction}
\label{S:intro}  

 Optimal experimental design is a classical problem with substantial recent developments.  For example, \citet{biedermann2006optimal}, \citet{dette2008optimal}, \citet{feller2017optimal}, and \citet{schorning2017optimal} studied optimal designs for dose-response models; \citet{dette2016optimal} and \citet{dette2017new} investigated optimal designs for correlated observations; 
\citet{dror2006robust} and \citet{gotwalt2009fast}  studied robustness issues in optimal designs; \citet{lopez2007optimal}, \citet{waterhouse2008design}, \citet{biedermann2009constrained}, \citet{dette2009optimal}, and \citet{dette2018optimal} studied optimal discrimination designs; \citet{biedermann2011optimal} studied optimal design for additive partially nonlinear models; \citet{yu2011d}, \citet{yang.biedermann.tang.2013}, \citet{sagnol2015computing}, and \citet{harman2017barycentric} investigated algorithms for deriving optimal designs; and \citet{yang2009support}, \citet{yang2010garza}, \citet{dette2011note}, \citet{yang2012identifying}, and \citet{dette2013complete} built a new theoretical framework for studying optimal designs.  The focus of these developments has been exclusively on regular models that enjoy certain normal features asymptotically, 
such as generalized linear models.  However, certain non-regular models may be appropriate in practical applications \citep[e.g.,][]{chernozhukov2004likelihood, hirose1997inference, cousineau2009fitting}.  In particular, \citet{smith1994nonregular} describes a class of non-regular linear regression models, 
\[ y = x^\top \theta + \eps, \]
where the error $\eps$ is non-negative, which implies a non-regular model for $y$, given $x$, since its distribution has $\theta$-dependent support. Such models are useful if the goal is to study extremes; for example, $x^\top \theta$ might represent the lower bound on remission time when a patient is subjected to treatment settings described by the vector $x$. To date, there is no literature on optimal designs for cases like this, and the goal of this paper is to fill this gap by developing an approach to optimal design in non-regular problems.  

Towards formulating a design problem in a non-regular model, the first obstacle is that the Fisher information matrix---the fundamental object in the classical optimal design context---does not exist.  To overcome this, we draw inspiration from recent work on the development of non-informative priors in the Bayesian context, thereby backtracking the path taken by \citet{lindley1956} and \citet{bernardo1979} from information in an experiment to non-informative priors. In particular, \citet{shemyakin2014hellinger} proposes an alternative to Fisher information and generalizes the non-informative prior construction of Jeffreys.  An important feature of the Fisher information is how it describes the local behavior of the Hellinger distance (see Section~\ref{S:review}), leading to its connection to estimator quality via the information inequality.  Unfortunately, the role that Shemyakin's information 
plays in the local approximation of Hellinger distance for multi-parameter models remains unclear; 
see Remark~\ref{re:shemyakin}. 
Since a connection to the quality of estimators is essential to our efforts to define a meaningful notion of optimal designs, we take an alternative approach where the focus is on a local approximation of Hellinger distance.



We start by looking at the local behavior of the squared Hellinger distance between models $P_\theta$ and $P_\vartheta$, for $\vartheta \approx \theta$.  In the regular cases, there is a local quadratic approximation to the squared distance and the Fisher information matrix appears in the approximating quadratic form.  In non-regular problems, by definition, the squared Hellinger distance is not locally quadratic, so there is no reason to expect that an ``information matrix'' can be extracted from this approximation.  In fact, not being differentiable in quadratic mean implies that the Hellinger distance is continuous at $\theta$, but not differentiable, so important features of the local approximation will generally depend on both the magnitude and the direction of the departure of $\vartheta$ from $\theta$.  From the local Hellinger distance approximation for a given direction, we define a direction-dependent {\em Hellinger information}, which is additive like Fisher information for independent data, and establish a corresponding information inequality that suitably lower-bounds the risk function of an arbitrary estimator along that direction.  The direction-dependence is removed via profiling, and the result is a locally minimax lower bound on the risk of arbitrary estimators, which is inversely related to our direction-free Hellinger information. Therefore, just like in the familiar Cram\'er--Rao inequality for regular models, larger Hellinger information means a smaller lower bound and, consequently, better estimation in terms of risk.  

The established connection between our Hellinger information for non-regular models and the quality of estimators provides a natural path to approach the optimal design problem.  In particular, our Hellinger information depends on the design, so we define the optimal design as one that maximizes the Hellinger information.  The intuition, just like in the regular case, is that maximizing the information minimizes the lower bound on the risk, thereby leading to improved estimation.  If the model happens to be regular, then our proposed optimal design corresponds to the classical E-optimal design that maximizes the minimum eigenvalue of the Fisher information matrix, so the new approach at least has intuitive appeal.  After formally defining the notion of optimal design in this context, we develop some novel theoretical results, in particular a complete class theorem for symmetric designs in the context of non-regular polynomial regression.  This theorem, along with some special cases presented in Propositions~\ref{prop:linear}--\ref{prop:quadratic}, suggests the potential for a line of developments parallel to that for regular models.  

The remainder of the paper is organized as follows.  Section~\ref{S:review} sets our notation and briefly reviews the Fisher information and its properties under regularity conditions.  We relax those regularity conditions in Section~\ref{S:info} and develop a notion of Hellinger information for certain non-regular models.  The main result of the paper, Theorem~\ref{thm:bound}, establishes a connection between this Hellinger information and the quality of estimators, thus paving the way for a framework of optimal designs for non-regular models in Section~\ref{S:design}.  Some specific non-regular regression models are considered in Section~\ref{S:special}, and we derive some analytical optimality results and some numerical demonstrations of the improved efficiency of the optimal designs over other designs.  Some concluding remarks are given in Section~\ref{S:discuss} and proofs of the two main theorems are presented in Appendix~\ref{S:proofs1}; the remaining details are given in the Supplementary Material \citep{nonreg.supp}.

\section{Review of information in regular models}
\label{S:review}

The proposed model assumes that the $\YY$-valued observations $Y_1,\ldots,Y_n$ are independent, and the marginal distribution of $Y_i$ is $P_{i,\theta}$, where $\theta$ is a fixed and unknown parameter in $\Theta \subseteq \RR^d$.  For example, $P_{i,\theta}$ might be a distribution that depends on both the parameter $\theta$ and a fixed covariate vector $x_i$.  We will further assume that, for each $i=1,\ldots,n$, $P_{i,\theta}$ has a density $p_{i,\theta}$ with respect to a fixed dominating $\sigma$-finite measure $\mu$.  When the index $i$ is not important, and there is no risk of confusion, we will drop the index and write simply $p_\theta$ for the density function with respect to $\mu$.  

It is common to assume that the model is regular in the sense that $\theta \mapsto p_{i,\theta}(y)$ is smooth for each $y$, and that $\theta$-derivatives of expectations can be evaluated by interchanging differentiation and integration.  For example, under conditions (6.6) in \citet{lehmann.casella.book}, one can define the $d \times d$ Fisher information matrix $I_i(\theta)$, whose $(k,\ell)$ entry is given by 
\begin{equation}
\label{eq:fim}
E_\theta\Bigl\{\frac{\partial}{\partial\theta_k} \log p_{i,\theta}(Y_i) \cdot \frac{\partial}{\partial\theta_\ell} \log p_{i,\theta}(Y_i) \Bigr\}, \quad k,\ell = 1,\ldots,d. 
\end{equation}
The Fisher information matrix can be defined in broader generality for families of distributions with a {\em differentiability in quadratic mean} property \citep[e.g.,][]{pollard1997another, van1998asymptotic}.  That is, assume that there exists a function $\dot\ell_\theta$, typically the gradient of $\log p_\theta$, taking values in $\RR^d$, such that 
\[ \int \bigl( p_{\theta+\eps}^{1/2} - p_\theta^{1/2} - \tfrac12 \eps^\top \dot\ell_\theta p_\theta^{1/2} \bigr)^2 \,d\mu = o(\|\eps\|^2), \quad \eps \to 0, \]
where $\|\cdot\|$ denotes the $\ell_2$-norm.  Then the Fisher information matrix exists and is given by the formula $I(\theta) = \int \dot\ell_\theta \dot\ell_\theta^\top \, p_\theta \,d\mu$.  If we let $H(P_\theta,P_\vartheta)$ denote the Hellinger distance and define $h$ as 
\[ h(\theta; \vartheta) \equiv H^2(P_\theta, P_\vartheta) := \int (p_\theta^{1/2} - p_\vartheta^{1/2})^2 \,d\mu = 2 - 2 \int (p_\theta p_\vartheta)^{1/2} \,d\mu, \]
then the above condition amounts to $h$ being locally quadratic:
\[ h(\theta; \theta+\eps) = \tfrac14 \, \eps^\top I(\theta) \eps + o(\|\eps\|^2). \]
Therefore, a model is regular if the squared Hellinger distance is locally approximately quadratic, with the Fisher information matrix characterizing that quadratic approximation.  This is the description of Fisher information that we will attempt to extend to the non-regular case below.  

Recall, also, that Fisher information is additive under independence.  That is, if $Y_1,\ldots,Y_n$ are independent, with $Y_i \sim p_{i,\theta}$, regular as above for each $i$, then the Fisher information in the sample of size $n$ satisfies 
\[ \I_n(\theta) = \sum_{i=1}^n I_i(\theta), \]
where $I_i(\theta)$ is the Fisher information matrix in \eqref{eq:fim} based on $p_{i,\theta}$ alone.  This property has a nice interpretation: larger samples have more information.  

Under differentiability in quadratic mean, one can prove an {\em information inequality} which states that, for any unbiased estimator $T=T(Y_1,\ldots,Y_n)$ of $m(\theta) = E_\theta(T) \in \RR$ with finite second moment, the variance is lower-bounded and satisfies
\[ V_\theta(T) \geq \dot m(\theta)^\top \, \I_n(\theta)^{-1} \, \dot m(\theta), \]
where $\dot m(\theta)$ is the gradient of $m(\theta)$; see \citet{pollard.chap6}.  The information inequality above, and its various extensions, establishes a fundamental connection between the quality of an estimator---in this case, the variance of an unbiased estimator---and the Fisher information matrix.  This connection has been essential to the development of optimal design theory and practice since the quality of an estimator can be ``optimized'' by choosing a design that makes the quadratic form in the lower bound as small as possible, or the Fisher information as large as possible.    


Finally, differentiability in quadratic mean implies local asymptotic normality \citep[e.g.,][Theorem~7.2]{van1998asymptotic} which is almost all one needs to show that maximum likelihood estimators are efficient in the sense that they attain the information inequality lower bound \citep[e.g.,][Theorem~7.12]{van1998asymptotic}.  Therefore, in sufficiently regular problems, there is a general procedure for constructing high-quality estimators, and that the quality of such estimators is controlled by the Fisher information matrix.  The remainder of this paper is concerned with non-regular cases
and, unfortunately, these differ from their regular counterparts in several fundamental ways.  First, the Fisher information is not well-defined in non-regular cases, so we have no general way of measuring the quality of estimators.  Second, one cannot rely on maximum likelihood for constructing good estimators.  For example, Le~Cam writes \citep[see][p.~674]{vaart.lecam.2002}

\begin{quote}
The author is firmly convinced that a recourse to maximum likelihood is justifiable only when one is dealing with families of distributions that are extremely regular. The cases in which maximum likelihood estimates are readily obtainable and have been proved to have good properties are extremely restricted.
\end{quote}
Therefore, to achieve our goals, we need a measure of information that is flexible enough to handle non-regular problems and is connected to estimation quality in general, but does not depend on a particular estimator.  The {\em Hellinger information}, defined in Section~\ref{SS:hellinger}, will meet these criteria and will provide a basis for defining optimal designs in non-regular problems. 


\section{Information in non-regular models}
\label{S:info}

\subsection{Definition and basic properties}
\label{SS:hellinger}

To start, we consider the scalar case with $d=1$.  Suppose that there exists a constant $\alpha \in (0,2]$ such that, for each $\theta$, the limit $J(\theta) = \lim_{\eps \to 0} |\eps|^{-\alpha} h(\theta; \theta + \eps)$ exists, is finite, and non-zero.  If such an $\alpha$ exists, then it must be unique; but there are cases where existence fails, e.g., when $\theta$ is not identifiable, so that $h(\theta, \theta+\eps) \equiv 0$ for all sufficiently small $\eps$.  The case $\alpha=2$ corresponds to differentiable in quadratic mean and, hence, ``regular,'' while $\alpha \in (0,2)$ corresponds to ``non-regular.'' Differentiability of $\vartheta \mapsto H(P_\theta, P_\vartheta)$ or lack thereof determines a model's regularity, so the largest value $\alpha$ can take is 2; otherwise, the limit is infinite. From the above limit, there is a local approximation, 
\begin{equation}
\label{eq:scalar.approx}
h(\theta;\vartheta) = J(\theta) |\theta-\vartheta|^\alpha + o(|\theta-\vartheta|^\alpha).
\end{equation}
This resembles the local H\"older condition considered in \citet[][Section~I.6]{ibragimov1981statistical}.  We call $\alpha$ the {\em index of regularity} and $J(\theta)$ the {\em Hellinger information}.  Of course, if $\alpha=2$, then $J(\theta)$ is proportional to $I(\theta)$, the Fisher information.  Next are a few quick examples, all with $\alpha=1$. 
\begin{itemize}
\item If $P_\theta=\unif(0,\theta)$, $\theta > 0$, then $J(\theta) = \theta^{-1}$.
\item If $P_\theta=\unif(\theta^{-1}, \theta)$, $\theta > 1$, then $J(\theta) = (\theta^2 + 1) \{\theta (\theta^2 - 1)\}^{-1}$. 
\item If $P_\theta=\unif(\theta; \theta^2)$, $\theta > 1$, then $J(\theta) = (2\theta + 1) \{\theta (\theta - 1)\}^{-1}$.
\end{itemize}

A class of non-regular models of particular interest to us here are those in \citet{smith1994nonregular} based on location shifts of distributions supported on the positive half-line.  Consider a density $p_0$ on $(0,\infty)$ that satisfies
\begin{equation}
\label{eq:p0}
p_0(y) = \beta \, c \, y^{\beta - 1}, \quad \text{as $y \to 0$,}
\end{equation}
where $\beta \geq 1$ and $c=c(\beta) \in (0,\infty)$.  For example, the gamma and Weibull families, with shape parameter $\beta$ and scale $\sigma$, have $c = \{\beta \sigma^\beta \Gamma(\beta)\}^{-1}$ and $c=\sigma^{-\beta}$, respectively.  The next result identifies the regularity index $\alpha$ and the Hellinger information $J(\theta)$ for this class of location parameter problems, with $p_\theta(y) = p_0(y-\theta)$. It shows that $\alpha$ need not be an integer and the Hellinger information, like Fisher's, is constant in location models. When $\beta \geq 2$, the model is regular---with $\alpha=2$ and the Fisher information defined as usual---so we focus here on the non-regular case with $\beta \in [1,2)$.  

\begin{prop}
\label{J.shortcut}
Let $p_0$ satisfy \eqref{eq:p0} with $\beta \in [1,2)$.  If, for some $\Delta > 0$, 
\begin{equation}
\label{eq:right.tail}
\int_\Delta^\infty \Bigl( \frac{d}{dy} \log p_0(y) \Bigr)^2 p_0(y) \,dy < \infty,
\end{equation}
then $\alpha = \beta$ and $J(\theta) \equiv c\{1 + \beta \, r(\beta)\}$, where $c$ is as in \eqref{eq:p0} and  
\begin{equation}
\label{eq:r.beta}
r(\beta) = \int_0^\infty \{(w+1)^{(\beta-1)/2} - w^{(\beta-1)/2}\}^2 \,dw. 
\end{equation}
\end{prop}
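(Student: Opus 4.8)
The plan is to use translation invariance to reduce $h(\theta;\theta+\eps)$ to a $\theta$-free integral, and then to split that integral into a ``boundary'' piece arising from the region where one of the two densities vanishes and an ``interior'' piece arising from the non-smoothness of $p_0$ at the origin; these will contribute $c|\eps|^\beta$ and $\beta c\,r(\beta)|\eps|^\beta$, respectively.

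\emph{Reduction and the boundary term.} Write $p_\theta(y)=p_0(y-\theta)$ with $p_0$ supported on $(0,\infty)$ and substitute $u=y-\theta$: then $h(\theta;\theta+\eps)=\int_{\RR}\{p_0(u)^{1/2}\mathbf{1}_{\{u>0\}}-p_0(u-\eps)^{1/2}\mathbf{1}_{\{u>\eps\}}\}^2\,du$, which depends only on $\eps$ and, after a further change of variable, is even in $\eps$; hence it suffices to treat $\eps>0$, and $J$ is automatically constant in $\theta$ as asserted. For $\eps>0$, split at $u=\eps$ as $A(\eps)+B(\eps)$, where $A(\eps)=\int_0^\eps p_0(u)\,du$ (the range on which $p_0(u-\eps)=0$) and $B(\eps)=\int_\eps^\infty\{p_0(u)^{1/2}-p_0(u-\eps)^{1/2}\}^2\,du$. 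From \eqref{eq:p0}, $A(\eps)=\int_0^\eps\{\beta c\,u^{\beta-1}+o(u^{\beta-1})\}\,du=c\eps^\beta+o(\eps^\beta)$, so $\eps^{-\beta}A(\eps)\to c$.

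\emph{The interior term.} Fix small $\delta>0$ and split $B(\eps)=B_1(\eps)+B_2(\eps)$ at $u=\delta$. In $B_2(\eps)=\int_\delta^\infty$, write $p_0(u)^{1/2}-p_0(u-\eps)^{1/2}=\int_{u-\eps}^u(p_0^{1/2})'(v)\,dv$, apply Cauchy--Schwarz and Fubini, and use $(p_0^{1/2})'(v)^2=\tfrac14\{(\log p_0)'(v)\}^2p_0(v)$ together with the right-tail hypothesis \eqref{eq:right.tail} (and the local regularity of $p_0$ on compact subsets of $(0,\infty)$) to obtain $B_2(\eps)\le\tfrac14\eps^2\int_{\delta-\eps}^\infty\{(\log p_0)'(v)\}^2p_0(v)\,dv=O(\eps^2)=o(\eps^\beta)$ since $\beta<2$. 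In $B_1(\eps)=\int_\eps^\delta$, substitute $u=\eps(w+1)$ to get $\eps^{-\beta}B_1(\eps)=\int_0^{\delta/\eps-1}\{f_\eps(w+1)-f_\eps(w)\}^2\,dw$, where $f_\eps(w)=\eps^{(1-\beta)/2}p_0(\eps w)^{1/2}$. By \eqref{eq:p0}, $f_\eps(w)\to(\beta c)^{1/2}w^{(\beta-1)/2}$ pointwise, so the integrand tends to $\beta c\{(w+1)^{(\beta-1)/2}-w^{(\beta-1)/2}\}^2$, whose integral over $(0,\infty)$ equals $\beta c\,r(\beta)$ and is finite precisely when $\beta<2$; passing the limit under the integral by dominated convergence gives $\eps^{-\beta}B_1(\eps)\to\beta c\,r(\beta)$.

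\emph{Conclusion and the main obstacle.} Combining the three estimates, $\eps^{-\beta}h(\theta;\theta+\eps)\to c+\beta c\,r(\beta)=c\{1+\beta r(\beta)\}$, which is finite and, since $c>0$, nonzero; by the uniqueness of the index of regularity noted after \eqref{eq:scalar.approx}, this forces $\alpha=\beta$ and $J(\theta)\equiv c\{1+\beta r(\beta)\}$. The hard part is the domination step: one needs an integrable $G(w)$ bounding $\{f_\eps(w+1)-f_\eps(w)\}^2$ uniformly over all small $\eps$ on the expanding range $w\le\delta/\eps$. For $w\le1$ the bound $p_0(y)\le Cy^{\beta-1}$ near the origin suffices; for $w\ge1$ (so $\eps w\in(\eps,\delta)$) one must control $(p_0^{1/2})'$ near the boundary, which requires a differentiable form of \eqref{eq:p0}, namely $p_0'(y)\sim\beta(\beta-1)c\,y^{\beta-2}$ (valid for the gamma and Weibull families), yielding $(p_0^{1/2})'(v)^2\le Kv^{\beta-3}$ and hence $\{f_\eps(w+1)-f_\eps(w)\}^2\le Kw^{\beta-3}$, which is integrable on $[1,\infty)$ exactly because $\beta<2$. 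This is also where the non-regularity is visible: $\int_0(p_0^{1/2})'^2$ diverges, so the Fisher information is infinite, yet only the weaker $w^{\beta-3}$ control is needed for the Hellinger computation.
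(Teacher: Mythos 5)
Your proof is correct in outline and follows the same overall strategy as the paper's: reduce by translation invariance, split $h(\theta;\theta+\eps)$ into the boundary mass $\int_0^\eps p_0$ (giving $c\eps^\beta$) plus the overlap integral, kill the far part $\int_\Delta^\infty$ using \eqref{eq:right.tail} and dominated convergence, and rescale the near part by $\eps$ to produce $\beta c\,r(\beta)\eps^\beta$. The one substantive difference is how the dominated-convergence step for the near-origin integral is justified, and you have correctly identified it as the crux. You dominate the full rescaled difference $\{f_\eps(w+1)-f_\eps(w)\}^2$ directly, which forces you to control $(p_0^{1/2})'$ near the boundary and hence to assume a differentiated form of \eqref{eq:p0}, namely $p_0'(y)\sim\beta(\beta-1)c\,y^{\beta-2}$ --- a hypothesis not in the statement of the proposition. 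The paper avoids this by writing $p_0(y)=\beta f(y)y^{\beta-1}$ with $f(0)=c$ and adding and subtracting $f(z+\eps)^{1/2}z^{(\beta-1)/2}$, so that the integrand splits into three terms: the cross terms are $o(\eps^\beta)$ using only smoothness of $f$, and the leading term $f(z+\eps)\{(z+\eps)^{(\beta-1)/2}-z^{(\beta-1)/2}\}^2$ isolates an \emph{explicit} power difference for which the mean value theorem gives the dominating function $\min\{1,b^2w^{2(b-1)}\}$ with $2(b-1)=\beta-3<-1$, integrable on $(0,\infty)$ with no reference to $p_0'$. What the paper's decomposition buys is exactly the removal of your extra derivative hypothesis (it needs only boundedness and Lipschitz-type smoothness of the slowly varying factor $f$ near zero, which it does assume implicitly); what your version buys is a slightly more direct computation that is perfectly adequate for the gamma and Weibull families the paper actually uses. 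Since you flagged the gap yourself and your stated remedy is valid under the mildly strengthened hypothesis, I would count this as the same proof with a less economical domination argument rather than a flawed one.
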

 
\begin{proof}
See Section S2.1 in the Supplementary Material.
\end{proof}

\citet[][Theorem~VI.1.1]{ibragimov1981statistical} show that, in this case, $h(\theta;\theta + \eps)= O(|\eps|^\beta)$ as $\eps \to 0$, but they do not identify $J(\theta)$.  
Similar results have appeared elsewhere in the literature on non-regular models; our condition \eqref{eq:right.tail} is basically the same as Condition~$C_5$ in \citet{woodroofe1974}, which is basically the same as Assumption~9 in \citet{smith1985}.  

Turning to the general, non-regular multi-parameter case, where $\Theta$ is an open subset of $\RR^d$, defining Hellinger information requires some additional effort. 
In particular, non-regularity implies that the familiar local quadratic approximation of $h$ fails, so we should not expect to have an ``information matrix'' to describe the local behavior in such cases.   In fact, $h(\theta; \vartheta)$ depends locally on the {\em direction} along which $\vartheta$ approaches $\theta$, so there is no ``direction-free'' summary of the local structure and, hence, no ``information matrix''; see Remark~\ref{re:shemyakin}.  But this lack of a convenient quadratic approximation need not stop us from defining a suitable {\em Hellinger information}.  

\begin{defn}
\label{def:info}
Let $\Theta$ be an open subset of $\RR^d$, for $d \geq 1$, and let $u$ denote a generic direction, a $d$-vector with $\|u\|=1$.  Suppose there exists $\alpha \in (0,2]$ such that, for all $\theta \in \Theta$ and all directions $u$, the following limit exists and is neither 0 nor $\infty$:
\begin{equation}\label{Def.J}
 \lim_{\eps \to 0} \frac{ h(\theta; \theta + \eps u) }{|\eps|^\alpha}= J(\theta; u). 
 \end{equation}
Then, the following local approximation holds: 
\begin{equation}
\label{eq:local.expansion}
h(\theta; \theta +\eps u) = J(\theta; u) \, |\eps|^\alpha + o(|\eps|^\alpha), \quad \eps \to 0. 
\end{equation}
This defines the {\em index of regularity} $\alpha$ and the {\em Hellinger information} $J(\theta; u)$ at $\theta$ in the direction of $u$. 
\end{defn}

Since the approximation \eqref{eq:local.expansion} is in terms of $|\eps|$, it follows that $J(\theta; u) = J(\theta; -u)$, so $J(\theta;u)$ really only depends on the {\em line} defined by $u$.  If $d=1$, then there is only one line, i.e., $u=\pm 1$, hence, for the scalar case, we can drop the $u$ argument entirely and write $J(\theta)$ as described above.  It is also worth pointing out that Definition~\ref{def:info} assumes that a single index $\alpha$ suffices to describe the regularity of a model with a $d$-dimensional parameter.  This is appropriate for the kinds of regression models we have in mind here, but can be a limitation in other cases; see Remark~\ref{re:one.alpha} below.  

As a quick example, let $P_\theta = \unif(\theta_1,\theta_1 + \theta_2)$, where $\theta_1 \in \RR$ and $\theta_2 > 0$.  In this form, $\theta_1$ and $\theta_2$ are location and scale parameters, respectively.  If $u=(u_1,u_2)$ is a generic vector on the unit circle, then $J(\theta; u) = \theta_2^{-1} g(u)$,
\label{twopar.unif} 
where $g(u)$ has a form which is slightly too complicated to present here; see Section~S1 in the Supplementary Material.  This expression agrees with the familiar properties of Fisher information for location--scale models. 


Although we do not define an ``information matrix'' in the non-regular case (see Remark~\ref{re:shemyakin}), when the model is regular, i.e., when $\alpha=2$, there are still some connections between our Hellinger information and the familiar Fisher information.  In particular, $J(\theta; u)$ is a quadratic form involving the Fisher information $I(\theta)$ and the direction $u$.  This gives an alternative explanation of how the regular models admit a separation of the dependence on $\theta$ and on the direction $u$ of departure from $\theta$.  

\begin{prop}
\label{prop:JequalsI}
For a regular model, with $\alpha=2$, if $I(\theta)$ denotes the $d \times d$ Fisher information matrix, then $J(\theta; u) = \tfrac14 u^\top \, I(\theta) \, u$.
\end{prop}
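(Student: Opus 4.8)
The plan is to reduce the claim to the differentiability-in-quadratic-mean (DQM) representation recalled in Section~\ref{S:review}, which is exactly what ``regular'' means here, and then expand the squared Hellinger distance along the ray $\vartheta = \theta + \eps u$. Writing $\eta = \eps u$, so that $\|\eta\| = |\eps|$ since $\|u\| = 1$, DQM gives
\[ p_{\theta+\eta}^{1/2} - p_\theta^{1/2} = \tfrac12 \eta^\top \dot\ell_\theta \, p_\theta^{1/2} + r_\eta, \qquad \int r_\eta^2 \, d\mu = o(\|\eta\|^2), \quad \eta \to 0. \]
Substituting into $h(\theta;\theta+\eta) = \int (p_{\theta+\eta}^{1/2} - p_\theta^{1/2})^2 \, d\mu$ and expanding the binomial produces three terms: a main term $\tfrac14 \int (\eta^\top \dot\ell_\theta)^2 p_\theta \, d\mu = \tfrac14 \eta^\top I(\theta) \eta$, by the formula $I(\theta) = \int \dot\ell_\theta \dot\ell_\theta^\top p_\theta \, d\mu$ together with $(\eta^\top \dot\ell_\theta)^2 = \eta^\top \dot\ell_\theta \dot\ell_\theta^\top \eta$; a remainder term $\int r_\eta^2 \, d\mu = o(\|\eta\|^2)$; and a cross term.

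For the cross term I would apply Cauchy--Schwarz:
\[ \Bigl| \int (\eta^\top \dot\ell_\theta)\, p_\theta^{1/2} \, r_\eta \, d\mu \Bigr| \le \Bigl( \int (\eta^\top \dot\ell_\theta)^2 p_\theta \, d\mu \Bigr)^{1/2} \Bigl( \int r_\eta^2 \, d\mu \Bigr)^{1/2} = O(\|\eta\|) \cdot o(\|\eta\|) = o(\|\eta\|^2), \]
using $\eta^\top I(\theta) \eta \le \|\eta\|^2 \lambda_{\max}(I(\theta))$ to bound the first factor. Collecting the three pieces and recalling $\eta = \eps u$,
\[ h(\theta; \theta + \eps u) = \tfrac14 \eps^2 \, u^\top I(\theta) \, u + o(\eps^2), \qquad \eps \to 0, \]
so dividing by $|\eps|^\alpha = \eps^2$ and letting $\eps \to 0$ yields $J(\theta;u) = \tfrac14 u^\top I(\theta) u$. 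This limit is automatically finite and nonzero---so consistent with the hypotheses of Definition~\ref{def:info}---as soon as $I(\theta)$ is positive definite, which is part of what regularity buys us.

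The computation is essentially routine once DQM is invoked, so I do not anticipate a serious obstacle; the only points needing minor care are (i) making explicit that, along the fixed direction $u$, the DQM remainder, which is $o(\|\eps u\|^2)$, is the same as $o(\eps^2)$, and (ii) the Cauchy--Schwarz control of the cross term. It is worth noting that this argument simultaneously re-derives the local quadratic expansion of $h$ displayed in Section~\ref{S:review} (the case $d=1$, or, for general $d$, the observation that the right-hand side is the quadratic form $\tfrac14(\eps u)^\top I(\theta)(\eps u)$), which is exactly why $\alpha = 2$ is forced in the regular case.
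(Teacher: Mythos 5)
Your argument is correct and is essentially the proof the paper has in mind: the paper gives no separate proof of Proposition~\ref{prop:JequalsI}, treating it as immediate from the DQM expansion $h(\theta;\theta+\eps) = \tfrac14\eps^\top I(\theta)\eps + o(\|\eps\|^2)$ recalled in Section~\ref{S:review}, evaluated along $\eps = \eps u$. Your write-up simply makes that derivation explicit (binomial expansion of the DQM representation plus Cauchy--Schwarz on the cross term), which is the standard route and introduces no gap.
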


Another useful and familiar feature of Fisher information that also holds for Hellinger information is the reparametrization formula (Proposition~\ref{prop:repar}), which comes in handy for regression problems where the natural parameter is expressed as a function of covariates and another parameter.

\subsection{Hellinger information inequality}
\label{SS:inequality}

We now return to our original setup where $Y_1,\ldots,Y_n$ are independent, but not necessarily identically distributed, with $Y_i \sim P_{i,\theta}$, $i=1,\ldots,n$, and $\theta$ is an unknown parameter taking values in an open subset $\Theta$ of $\RR^d$ for some $d \geq 1$.    Let $P_\theta^n$ denote the joint distribution of $Y^n=(Y_1,\ldots,Y_n)$.  Motivated by the regression problems below, we assume that each $P_{i,\theta}$ has the same index of regularity, $\alpha \in (0,2]$. Following our intuition from the regular case, define the Hellinger information at $\theta$, in the direction of $u$, based on the sample of size $n$, as 
\begin{equation}
\label{eq:total}
\J_n(\theta; u) = \sum_{i=1}^n J_i(\theta; u). 
\end{equation}
where $J_i(\theta; u)$ is the Hellinger information based on $P_{i,\theta}$ as described above.  See Remark~\ref{re:additive} for more on this additivity property. Theorem~\ref{thm:bound} below will establish a suitable connection between $\J_n(\theta; u)$ and the quality of an estimator, and this will provide the necessary foundation for defining optimal designs for non-regular models.  

Suppose the goal is to estimate $\psi(\theta)$, where $\psi: \RR^d \to \RR^q$, $q \leq d$, is sufficiently smooth.  Let $T_n = T(Y^n)$ be an estimator of $\psi(\theta)$, and measure its quality by the risk
\begin{equation}
\label{eq:risk}
R_\psi(T_n, \theta) = E_\theta^n\|T_n - \psi(\theta)\|^2, 
\end{equation}
the $q$-vector version of mean square error, where expectation, $E_\theta^n$, is with respect to $P_\theta^n$.  This covers the case where $\psi(\theta)=\theta$ and $q=d$, so that interest is in the full parameter $\theta$, and the case where $\psi(\theta)$ is a single component of $\theta$ and $q=1$, as well as other intermediate cases.  Next is the aforementioned lower bound on the risk in terms of the total Hellinger information.

\begin{thm}
\label{thm:bound}
Let $Y^n=(Y_1,\ldots,Y_n)$ consist of independent observations with $Y_i \sim P_{i,\theta}$, $i=1,\ldots,n$.  Let $\alpha \in (0,2]$ denote the common index of regularity, and $\J_n(\theta; u)$ the corresponding Hellinger information in \eqref{eq:total}.  Let $\psi: \Theta \to \RR^q$ be a differentiable function with full-rank $q \times d$ derivative matrix $D_\psi(\theta)$, and let $T_n = T(Y^n)$ be any estimator of $\psi(\theta)$ with risk function defined in \eqref{eq:risk}. If $\eps_{n,u} = \{3 \J_n(\theta; u)\}^{-1/\alpha}$, and
\begin{equation}
\label{eq:accumulates}
\lim_{n \to \infty}\inf_u [ n^{-1} \J_n(\theta; u)] > 0,
\end{equation}
then, for all large $n$, 
\begin{equation}
\label{eq:minimax}
\inf_{T_n} \sup_{\vartheta \in B_n(\theta)} R_\psi(T_n, \vartheta) \gtrsim \Bigl[ \inf_u  \bigl\{ \|D_\psi(\theta) \, u\|^{-\alpha} \, \J_n(\theta; u) \bigr\} \Bigr]^{-2/\alpha}, 
\end{equation}
where $B_n(\theta) \subset \Theta$ is the region whose boundary is determined by the union of $\{\theta + \eps_{n,u} u\}$ over all directions $u$.
\end{thm}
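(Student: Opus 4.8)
The plan is to establish \eqref{eq:minimax} by Le~Cam's two-point method, reducing the $d$-dimensional estimation problem to a single one-parameter perturbation of $\theta$ along each direction $u$, and taking the infimum over $u$ only at the very end.

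Fix a direction $u$ with $D_\psi(\theta) u \neq 0$, and consider the two competing parameter values $\theta_0 = \theta$ and $\theta_1 = \theta + \eps_{n,u} u$; by construction $\theta_0$ lies in the interior of $B_n(\theta)$ and $\theta_1$ on its boundary, and a routine shrinking of $\eps_{n,u}$ lets us treat both as points of $B_n(\theta)$. Le~Cam's two-point inequality for squared-error risk then gives, for any estimator $T_n$,
\[
\sup_{\vartheta \in B_n(\theta)} R_\psi(T_n, \vartheta) \;\geq\; \max_{j \in \{0,1\}} R_\psi(T_n, \theta_j) \;\geq\; \tfrac18 \, \|\psi(\theta_1) - \psi(\theta_0)\|^2 \bigl( 1 - \|P_{\theta_0}^n - P_{\theta_1}^n\|_{\mathrm{TV}} \bigr).
\]
Since $D_\psi(\theta)$ is the derivative of $\psi$ at $\theta$ and $\eps_{n,u} = \{3\J_n(\theta;u)\}^{-1/\alpha} \to 0$ (uniformly in $u$, because \eqref{eq:accumulates} forces $\J_n(\theta;u) \gtrsim n$), the separation satisfies $\|\psi(\theta_1) - \psi(\theta_0)\|^2 = \eps_{n,u}^2 \, \|D_\psi(\theta) u\|^2 \{1 + o(1)\}$. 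For the total-variation term, independence of $Y_1,\dots,Y_n$ makes the Hellinger affinity multiplicative, $\int (dP_{\theta_0}^n \, dP_{\theta_1}^n)^{1/2} = \prod_{i=1}^n \{1 - \tfrac12 h_i(\theta; \theta + \eps_{n,u} u)\}$ with $h_i(\theta;\vartheta) := H^2(P_{i,\theta}, P_{i,\vartheta})$, and combining the elementary bounds $\|P-Q\|_{\mathrm{TV}}^2 \leq 1 - \{\int (dP\,dQ)^{1/2}\}^2$ and $\prod_i(1 - \tfrac12 h_i) \geq 1 - \tfrac12 \sum_i h_i$ reduces the term to controlling $\sum_{i=1}^n h_i(\theta; \theta + \eps_{n,u} u)$.

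This is the one place the H\"older-type expansion \eqref{eq:local.expansion} enters: $\sum_i h_i(\theta;\theta+\eps u) = |\eps|^\alpha \J_n(\theta;u) + \sum_i o(|\eps|^\alpha)$, and the particular choice $\eps = \eps_{n,u}$ makes the leading term exactly $\tfrac13$. Provided the $o(|\eps|^\alpha)$ remainder in \eqref{eq:local.expansion} holds uniformly over the index $i$ and the direction $u$---which is the situation in the regression models of interest, where the covariates range over a compact set and $J_i(\theta;u)$ is bounded above and below---the accumulated remainder is $\sum_i o(|\eps_{n,u}|^\alpha) = \{n/\J_n(\theta;u)\} \cdot o(1) = o(1)$, since \eqref{eq:accumulates} keeps $n/\J_n(\theta;u)$ bounded. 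Hence $\sum_i h_i(\theta;\theta+\eps_{n,u} u) \to \tfrac13$ uniformly in $u$, so $\|P_{\theta_0}^n - P_{\theta_1}^n\|_{\mathrm{TV}} \leq \{\tfrac13 + o(1)\}^{1/2}$ and $1 - \|P_{\theta_0}^n - P_{\theta_1}^n\|_{\mathrm{TV}}$ stays bounded below by a positive constant for all large $n$, uniformly in $u$. Verifying this uniformity of the local expansion---together with the two-sided order $\J_n(\theta;u) \asymp n$ on which the remaining bookkeeping rests, its lower half being \eqref{eq:accumulates}---is, I expect, the main obstacle; everything else is routine.

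Assembling the three pieces, for every admissible direction $u$ and all large $n$,
\[
\sup_{\vartheta \in B_n(\theta)} R_\psi(T_n, \vartheta) \;\gtrsim\; \eps_{n,u}^2 \, \|D_\psi(\theta) u\|^2 \;=\; \{3\J_n(\theta;u)\}^{-2/\alpha} \|D_\psi(\theta) u\|^2 \;=\; 3^{-2/\alpha} \bigl( \|D_\psi(\theta) u\|^{-\alpha} \, \J_n(\theta;u) \bigr)^{-2/\alpha},
\]
where the last equality is the algebraic identity $\|D_\psi(\theta) u\|^2 \, \J_n(\theta;u)^{-2/\alpha} = (\|D_\psi(\theta) u\|^{-\alpha} \J_n(\theta;u))^{-2/\alpha}$ and all displayed constants are positive and free of $n$. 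The left-hand side does not depend on $u$, and $x \mapsto x^{-2/\alpha}$ is decreasing, so maximizing the right-hand side over admissible $u$ produces $\bigl( \inf_u \{ \|D_\psi(\theta) u\|^{-\alpha} \J_n(\theta;u) \} \bigr)^{-2/\alpha}$; directions with $D_\psi(\theta) u = 0$ send the bracketed quantity to $+\infty$ and so do not affect the infimum, while a short argument using $\J_n(\theta;u) \asymp n$ shows the (attained) minimizing direction keeps $\|D_\psi(\theta) u\|$ bounded away from $0$, so the $o(1)$ terms stay uniform there. Finally, since the resulting lower bound is uniform over estimators $T_n$, taking $\inf_{T_n}$ gives \eqref{eq:minimax}.
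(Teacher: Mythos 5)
Your proposal is correct and follows essentially the same architecture as the paper's proof: perturb $\theta$ along a fixed direction $u$ by exactly $\eps_{n,u}=\{3\J_n(\theta;u)\}^{-1/\alpha}$, tensorize the squared Hellinger distance over the independent coordinates via $\prod_i(1-\tfrac12 h_i)\geq 1-\tfrac12\sum_i h_i$ so that the local expansion \eqref{eq:local.expansion} makes the accumulated distance $\approx\tfrac13$, Taylor-expand $\psi$ to convert the separation into $\eps_{n,u}^2\|D_\psi(\theta)u\|^2$, and only then optimize over $u$. The one genuine difference is the two-point risk inequality you invoke: you use the standard Le~Cam reduction through total variation (Markov plus $\|P-Q\|_{\mathrm{TV}}^2\leq 1-\{\int(dP\,dQ)^{1/2}\}^2$), whereas the paper proves its own Lemma~\ref{lem:ih}, an Ibragimov--Hasminskii-style bound obtained by applying Cauchy--Schwarz to $m_\psi(\theta)-m_\psi(\vartheta)$ and splitting on the size of the bias, which works directly with $h(\theta;\vartheta)$ and yields the explicit constant $\min\{\frac{1-h}{4h},\frac1{16}\}$. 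The two lemmas are interchangeable here and differ only in the universal constant; your route has the advantage of being entirely off-the-shelf, while the paper's lemma keeps everything phrased in Hellinger distance without a detour through total variation. You are also right to flag the uniformity (in $i$ and $u$) of the $o(|\eps|^\alpha)$ remainders as the delicate point; the paper's proof passes over this at the same level of informality when it writes the remainder as $o(n\eps^\alpha)$ and appeals to \eqref{eq:accumulates}, so you are not missing anything the published argument supplies.
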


\begin{proof}
See Appendix~\ref{SS:thm1}.  
\end{proof}

Two very brief comments: first, the universal constant hidden in ``$\gtrsim$'' is known and given in the proof; second, there is nothing special about ``3'' in the definition of $\eps_{n,u}$, any number strictly greater than 2 would suffice.

Some additional comments about the interpretation of Theorem~\ref{thm:bound} are in order.  First, the reason for taking supremum over a small ``neighborhood" of $\theta$ is that a lucky choice of $T_n \equiv \psi(\theta)$ has excellent performance at $\theta$, but poor performance at a nearby $\vartheta$. The theorem basically says that, if one looks at a locally uniform measure of risk, which prevents  ``cheating" towards or luck at a particular $\theta$, then one cannot have smaller risk than that in the lower bound \eqref{eq:minimax}.  The classical Cram\'er--Rao lower bound uses unbiasedness of the estimator to prevent this kind of cheating/luck.

To assess the sharpness of the bound in \eqref{eq:minimax} when regularity conditions do not apply, consider the case where $q=1$, so that $\psi(\theta)$ is a scalar function.  For the rate, if we consider the identically independently distributed case, so that $\J_n(\theta; u) = n J_1(\theta; u)$, then it follows that the lower bound is of order $n^{-2/\alpha}$, which agrees with the known minimax rate for estimators in non-regular models \citep[][Sec.~I.5]{ibragimov1981statistical}. Therefore, the bound cannot be improved in terms of dependence on the sample size. To assess the quality of the lower bound in terms of its dependence on $\theta$, if the observations come from $\unif(0,\theta)$, which has $\alpha=1$ and $J(\theta)=\theta^{-1}$, the maximum likelihood estimator is the sample maximum, and its mean square error is given by 
\[ \frac{\theta^2 n}{(n+1)^2(n+2)} + \Bigl(\frac{\theta n}{n+1} - \theta\Bigr)^2. \]
Asymptotically, this expression is proportional to $\theta^2 n^{-2}$, which agrees with our lower bound. Therefore, up to universal constants, the bound in Theorem~\ref{thm:bound} is sharp.  Whether there exists an estimator that can attain the bound exactly or asymptotically is unclear in general; see Remark~\ref{re:attain}. 

It is worth stating the special case where $\alpha=2$ as a corollary to Theorem~\ref{thm:bound}.  This reveals some connection to the classical Cram\'er--Rao bound, even though we do not have access to an information matrix, and demonstrates the generality of our result.  

\begin{cor}
\label{cor:bound}
When $\alpha=2$, if $\psi: \Theta \to \RR^q$ has $q \times d$ derivative matrix $D_\psi(\theta)$ of rank $q \leq d$, and $\I_n(\theta)$ is the positive definite $d \times d$ Fisher information matrix, then the lower bound in \eqref{eq:minimax} is proportional to 
\[ \lambda_{\max}\{ D_\psi(\theta) \I_n(\theta)^{-1} D_\psi(\theta)^\top \}, \]
where $\lambda_{\max}(A)$ denotes the maximal eigenvalue of a matrix $A$.  
\end{cor}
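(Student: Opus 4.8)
The plan is to specialize Theorem~\ref{thm:bound} to $\alpha = 2$ and to recognize the resulting direction-optimization as an eigenvalue problem. By Proposition~\ref{prop:JequalsI} applied to each summand $J_i(\theta; u) = \tfrac14 u^\top I_i(\theta) u$, additivity \eqref{eq:total} gives $\J_n(\theta; u) = \tfrac14 u^\top \I_n(\theta) u$, where $\I_n(\theta) = \sum_{i=1}^n I_i(\theta)$ is the usual Fisher information matrix. Substituting $\alpha = 2$ into the lower bound \eqref{eq:minimax}, the right-hand side becomes
\[
\Bigl[ \inf_u \bigl\{ \|D_\psi(\theta) u\|^{-2} \, \tfrac14 \, u^\top \I_n(\theta) u \bigr\} \Bigr]^{-1},
\]
so, up to the factor $4$ absorbed into the proportionality constant, the claim reduces to the identity
\[
\sup_{\|u\|=1} \frac{ \|D_\psi(\theta) u\|^2 }{ u^\top \I_n(\theta) u } \;=\; \lambda_{\max}\bigl\{ D_\psi(\theta) \, \I_n(\theta)^{-1} \, D_\psi(\theta)^\top \bigr\}.
\]

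To establish this, I would reparametrize the unit sphere via the positive definite square root of $\I_n(\theta)$. Writing $v = \I_n(\theta)^{1/2} u$, the ratio on the left becomes $\|D_\psi(\theta) \I_n(\theta)^{-1/2} v\|^2 / \|v\|^2$, so the supremum over $u \neq 0$ equals the squared operator norm of the matrix $M := D_\psi(\theta) \I_n(\theta)^{-1/2}$, i.e.\ the largest eigenvalue of $M M^\top = D_\psi(\theta) \I_n(\theta)^{-1} D_\psi(\theta)^\top$. This is a standard Rayleigh-quotient argument; the positive definiteness of $\I_n(\theta)$ (hence invertibility of $\I_n(\theta)^{1/2}$) and the full-rank assumption on $D_\psi(\theta)$, already in force in Theorem~\ref{thm:bound}, ensure $MM^\top$ is a well-defined $q \times q$ positive definite matrix, so $\lambda_{\max}$ is strictly positive and the infimum in \eqref{eq:minimax} is strictly positive as well, making the reciprocal meaningful.

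There is essentially no substantive obstacle here—the only point requiring a word of care is bookkeeping of constants: the $\tfrac14$ from Proposition~\ref{prop:JequalsI} and the universal constant hidden in ``$\gtrsim$'' in Theorem~\ref{thm:bound} (made explicit in its proof) combine into the single proportionality constant asserted in the corollary, and one should check that this constant does not depend on $n$, $\theta$, or $\psi$, which follows because the only place $n$ enters is through $\I_n(\theta)$ inside the eigenvalue expression. I would state the displayed Rayleigh-quotient identity as the one computation worth writing out and leave the rest as immediate substitution.
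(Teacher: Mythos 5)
Your proposal is correct and follows essentially the same route as the paper's own proof: both reduce the $\alpha=2$ bound to the Rayleigh quotient $\sup_{\|u\|=1} u^\top D_\psi(\theta)^\top D_\psi(\theta) u / u^\top \I_n(\theta) u$, substitute a square root of $\I_n(\theta)$ (the paper via the spectral decomposition $\I_n(\theta)=ELE^\top$ with $M=D_\psi(\theta)EL^{-1/2}$, you via the symmetric root $M=D_\psi(\theta)\I_n(\theta)^{-1/2}$), and invoke $\lambda_{\max}(M^\top M)=\lambda_{\max}(MM^\top)=\lambda_{\max}\{D_\psi(\theta)\I_n(\theta)^{-1}D_\psi(\theta)^\top\}$. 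The differences are purely notational, so no further comment is needed.
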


\begin{proof}
See Section~S2.2 in the Supplementary Material. 
\end{proof}

For comparison to the classical setting, if we take $\psi(\theta) = \theta$, then the expression in the above display simplifies to 
\begin{equation}
\label{eq:special.bound}
\lambda_{\max}\{\I_n(\theta)^{-1}\} = \lambda_{\min}^{-1}\{\I_n(\theta)\}. 
\end{equation}
Wanting the information matrix to have a large minimal eigenvalue is a familiar concept in the classical optimal design theory; see Section~\ref{S:design}.  

This and the previous subsection, along with the remarks in Section~\ref{SS:remarks}, establish some important properties and insights concerning our proposed Hellinger information.  A difficulty that has not yet been addressed is the dependence of $\J_n(\theta; u)$ on the arbitrary direction $u$.  However, the lower bound in \eqref{eq:minimax} is free of a direction, so it makes sense to formulate a {\em direction-free} Hellinger information based on that.  For a non-regular model as formulated above, with index of regularity $\alpha \in (0,2]$, we set the direction-free Hellinger information at $\theta$, for interest parameter $\psi(\theta)$, as 
\begin{equation}
\label{eq:J.without.u}
\J_n^\psi(\theta) = \inf_u \bigl\{ \|D_\psi(\theta) \, u\|^{-\alpha} \J_n(\theta; u) \bigr\}.
\end{equation}
In the special case where $\psi(\theta) = \theta$, this simplifies to 
\begin{equation}
\label{eq:J.without.u.special}
\J_n(\theta) = \inf_u \J_n(\theta; u). 
\end{equation}
Moreover, in the regular case with $\alpha=2$, it follows from Corollary~\ref{cor:bound} and, in particular, \eqref{eq:special.bound}, that $\J_n(\theta)$ above is (proportional to) the smallest eigenvalue of the Fisher information matrix.  Therefore, definition \eqref{eq:J.without.u} seems very reasonable; more details are presented in Section~\ref{S:design}.  

\subsection{Technical remarks}
\label{SS:remarks}

\begin{remark}
\label{re:one.alpha}
Definition~\ref{def:info} does not allow $\alpha$ to depend on $u$, so each component of $\theta$, treated individually, must have the same index of regularity.  To see this, consider an exponential distribution with location and rate parameters $\theta_1$ and $\theta_2$, respectively.  If $\theta_1$ was fixed and only $\theta_2$ was unknown, then it is a regular problem and the above definition would hold with $\alpha=2$.  Similarly, if $\theta_2$ was fixed and only $\theta_1$ was unknown, then the definition holds with $\alpha=1$ according to Proposition~\ref{J.shortcut}.  However, if both $\theta_1$ and $\theta_2$ are unknown, then the model does not satisfy the conditions of Definition~\ref{def:info}.  Consider two unit vectors $u=(1,0)$ and $u'=(0,1)$.  If $\alpha=1$, then $J(\theta; u)$ is in $(0,\infty)$ but $J(\theta;u')$ is zero; likewise, if $\alpha=2$, then $J(\theta;u')$ is in $(0,\infty)$ but $J(\theta; u)$ is infinite.  Therefore, the above definition cannot accommodate situations where the components of $\theta$, treated individually, would have different regularity indices.  But the design applications we have in mind in this paper fit naturally within a setting where all components have the same regularity; the more general case will be considered elsewhere.  
\end{remark}

\begin{remark}
\label{re:shemyakin}
Our definition of Hellinger information coincides with that in \citet{shemyakin2014hellinger} for one-parameter models, but our perspectives differ when it comes to multi-parameter models.  Shemyakin defines a ``Hellinger information matrix'' for non-regular problems, which seems to contradict our above claim that no such matrix is available, so some more detailed comments are necessary.  Shemyakin makes no claim that his information matrix is related to the local behavior of $h$, and we are unable to conclude definitively whether it is or is not.  We do know, however, that $\vartheta \mapsto h(\theta,\vartheta)$ is ``bowl-shaped'' (though not smooth) at each $\theta$, so if such a matrix could describe the local behavior, then it ought to be non-negative definite.  However, \citet[][p.~931]{shemyakin2014hellinger} admits that a general non-negative definiteness result has not been established for his information matrix.  Without a non-negative definiteness result for his Hellinger information matrix, lower bounds like those in, e.g., \citet{shemyakin1991, shemyakin1992} may not be informative, and its use in defining optimal designs lacks justification. 
\end{remark}

\begin{remark}
\label{re:additive}
In \eqref{eq:total} we {\em defined} the Hellinger information in an independent sample of size $n$ as $\J_n(\theta; u) = \sum_{i=1}^n J_i(\theta; u)$, the sum of the individual Hellinger information measures.  This, however, is not a choice made by us, it is a consequence of the proof of Theorem~\ref{thm:bound}.  To see this, heuristically, start with the Hellinger distance between joint distributions $P_\theta^n$ and $P_\vartheta^n$, assuming independence.  A straightforward calculation reveals 
\begin{align*}
H^2(P_\theta^n, P_\vartheta^n) 
& = 2 - 2 \prod_{i=1}^n \int \{ p_{i,\theta}(y_i) p_{i,\vartheta}(y_i) \}^{1/2} \, dy_i \\
& = 2 - 2 \exp\Bigl\{ \sum_{i=1}^n \log \bigl[ 1 - \tfrac12 H^2(P_{i,\theta}, P_{i,\vartheta}) \bigr] \Bigl\}
\end{align*}
Since $\log(1+x) \approx x$ for $x \approx 0$, if $\vartheta$ is sufficiently close to $\theta$, then the exponent is approximately $-\frac12 \sum_{i=1}^n H^2(P_{i,\theta}, P_{i,\vartheta})$ and then, by Taylor's theorem applied to $x \mapsto e^{-x}$ at $x \approx 0$, we conclude that 
\[ H^2(P_\theta^n, P_\vartheta^n) \approx \sum_{i=1}^n H^2(P_{i,\theta}, P_{i,\vartheta}). \]
Therefore, a local approximation of the left-hand side is roughly equal to a sum of local approximations on the right-hand side, which leads to \eqref{eq:total}.  
\end{remark}

\begin{remark}
\label{re:attain}
An important unanswered question in the above theory is if there are any estimators that are efficient in the sense that they attain the lower bound in Theorem~\ref{thm:bound} in some generality.  In the simple $\unif(0,\theta)$ example above, we showed that the bound is asymptotically attained, up to universal constants, by the sample maximum.  General results about the rate of convergence in non-regular models are consistent with our lower bound, but, to our knowledge, more precise results concerning the asymptotic behavior of estimators in non-regular problems is limited to certain special cases.  Our work here provides some motivation for further investigation of these asymptotic properties.  Not having an estimator that provably attains the lower bound complicates our attempts to demonstrate the efficiency gains of our proposed optimal designs in Section~\ref{S:design}, but a quality estimator is available in the applications we have in mind; see Section~\ref{SS:numerical}.  
\end{remark}

\section{Optimal designs for non-regular models}
\label{S:design}

\subsection{Definition}

The previous section built up a framework of information, based on a local approximation of the squared Hellinger distance, suitable for non-regular problems where Fisher information does not exist.  Our motivation for building such a framework was to address the problem of optimal experimental designs in cases where the underlying statistical model is non-regular.  This section defines what we mean by an optimal design for non-regular models, and provides some additional details about the Hellinger information that are particularly relevant to the design problem.  

We start here with a slightly different setup than in the previous section, but quickly connect it back to the preceding.  Let $Y_1,\ldots,Y_n$ be independent observations, where $Y_i$ has density function $q_{\eta_i}$, for $i=1,\ldots,n$.  That is, each $Y_i$ has its own parameter $\eta_i$, which we will assume is real-valued, as is typical in linear and generalized linear models.  Then the design problem proceeds by expressing the unit-specific parameter $\eta_i$ as a given function $g(x_i,\theta)$ of a common parameter $\theta \in \RR^d$ and a vector of unit-specific covariates; here, of course, the covariates are constants that the investigator is able to set in any way he/she pleases, but preferably in a way that is ``optimal'' in some sense.  By linking each $\eta_i$ to a common $\theta$, we obtain the setup from previous sections, i.e., $Y_i \sim p_{i,\theta}$, independent, for $i=1,\ldots,n$.  

The next result, stated in the context of $n=1$, parallels a familiar one in the regular case for Fisher information.  It aids in computing the Hellinger information under a reparametrization like the one described above. 

\begin{prop}
\label{prop:repar}
Let $q_\eta$ be a density function depending on a scalar parameter $\eta$, and suppose that the index of regularity is $\alpha \in (0,2]$ and the Hellinger information is $\tilde J(\eta)$.  Define a new density $p_\theta$, for $\theta \in \Theta \subseteq \RR^d$, as $q_{g(\theta)}$ where $g: \Theta \to \RR$ is a smooth function with non-vanishing gradient $\dot g$.  Then $p_\theta$ also has index of regularity $\alpha$, and the corresponding Hellinger information at $\theta$, in the direction of $u$, is 
\[ J(\theta; u) = |\dot g(\theta)^\top u|^\alpha \tilde J(g(\theta)). \]
\end{prop}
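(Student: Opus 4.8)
The plan is to reduce the $d$-dimensional computation to the scalar local approximation \eqref{eq:scalar.approx} by combining two elementary facts: the Hellinger distance depends only on the underlying pair of distributions, not on how they happen to be indexed; and a differentiable map is locally affine. Fix $\theta \in \Theta$ and a unit vector $u$, write $h$ for the squared Hellinger distance attached to the family $p_\theta$ and $\tilde h$ for the one attached to $q_\eta$. Since $p_\theta = q_{g(\theta)}$ by construction,
\[ h(\theta; \theta + \eps u) = H^2(P_\theta, P_{\theta + \eps u}) = H^2(Q_{g(\theta)}, Q_{g(\theta + \eps u)}) = \tilde h\bigl(g(\theta);\, g(\theta + \eps u)\bigr) \]
for every $\eps$, so the entire task is to understand the rate at which $g(\theta + \eps u)$ approaches $g(\theta)$ and then feed that into \eqref{eq:scalar.approx}.

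First I would Taylor-expand $g$ at $\theta$: smoothness gives $\delta_\eps := g(\theta + \eps u) - g(\theta) = \eps\,\dot g(\theta)^\top u + o(\eps)$ as $\eps \to 0$ (and $\theta + \eps u \in \Theta$ for small $\eps$ since $\Theta$ is open). Assume for now that $\dot g(\theta)^\top u \neq 0$; then, for all small $\eps \neq 0$, $\delta_\eps \neq 0$, $\delta_\eps \to 0$, and $|\delta_\eps|$ is of the same order as $|\eps|$. Applying the scalar approximation \eqref{eq:scalar.approx} to $\tilde h$ with base point $g(\theta)$ and increment $\delta_\eps$ gives
\[ h(\theta; \theta + \eps u) = \tilde h\bigl(g(\theta);\, g(\theta) + \delta_\eps\bigr) = \tilde J\bigl(g(\theta)\bigr)\, |\delta_\eps|^\alpha + o(|\delta_\eps|^\alpha). \]
Since $|\delta_\eps|^\alpha = |\eps|^\alpha\,|\dot g(\theta)^\top u|^\alpha + o(|\eps|^\alpha)$, by continuity of $t \mapsto |t|^\alpha$, and $o(|\delta_\eps|^\alpha) = o(|\eps|^\alpha)$ because $|\delta_\eps| = O(|\eps|)$, dividing by $|\eps|^\alpha$ and letting $\eps \to 0$ shows that the limit in \eqref{Def.J} exists and equals $|\dot g(\theta)^\top u|^\alpha\,\tilde J(g(\theta))$. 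This value is finite and nonzero, so $\alpha$ is also the index of regularity of the family $p_\theta$, and the displayed formula in the proposition follows.

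The hard part will be the degenerate direction $\dot g(\theta)^\top u = 0$: there the argument above only delivers $h(\theta; \theta + \eps u) = o(|\eps|^\alpha)$, a limit of $0$, which Definition~\ref{def:info} formally excludes. For $d > 1$ such directions are unavoidable whenever $\dot g$ is nonzero, so I would phrase the conclusion as valid on the set $\{u : \dot g(\theta)^\top u \neq 0\}$; this suffices for the design applications, since there one sums the $J_i(\theta; u)$ over $i$ via \eqref{eq:total} and only the aggregate need be strictly positive. The only other point needing care — and it is routine — is the bookkeeping of remainders: the error term coming from \eqref{eq:scalar.approx} is $o$ of the increment as the increment tends to $0$, here along the specific curve $\eps \mapsto \delta_\eps$, so one must check, using boundedness of $\dot g$ near $\theta$ and hence $|\delta_\eps| = O(|\eps|)$, that it can legitimately be rewritten as $o(|\eps|^\alpha)$. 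Everything else is substitution.
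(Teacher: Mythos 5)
Your argument is exactly the paper's: both proofs use the fact that $h(\theta;\theta+\eps u)=\tilde h(g(\theta);g(\theta+\eps u))$, Taylor-expand $g$ to get $\delta_\eps=\eps\,\dot g(\theta)^\top u+o(\eps)$, and then feed $\delta_\eps$ into the scalar approximation \eqref{eq:scalar.approx} before dividing by $|\eps|^\alpha$. Your explicit treatment of the degenerate directions with $\dot g(\theta)^\top u=0$ (where the limit is $0$ and Definition~\ref{def:info} formally fails) is a point the paper's own proof silently glosses over, so if anything your version is slightly more careful.
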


\begin{proof}
See Section~S2.3 in the Supplementary Material.
\end{proof}

From the general theory in Section~\ref{S:info}, if $Y_1,\ldots,Y_n$ are independent, then under the assumptions in Proposition~\ref{prop:repar}, i.e., $Y_i \sim p_{i,\theta} = q_{g_i(\theta)}$, the Hellinger information at $\theta$, in direction of $u$, is 
\[ \J_n(\theta; u) = \sum_{i=1}^n |\dot g_i(\theta)^\top u|^\alpha \tilde J(g_i(\theta)). \]

For the special case where $g_i(\theta) = g(x_i, \theta)$ for covariates $x_i$, it is clear that $\J_n(\theta; u)$ depends on $x_1,\ldots,x_n$. For example, if $Y_1,\ldots,Y_n$ are independent, with $Y_i \sim g(x_i, \theta) + \gam(\beta, 1)$, where $g(x,\theta) = \theta_0 + \sum_{k=1}^p \theta_k x^{k+1}$, then it follows from Propositions~\ref{J.shortcut} and \ref{prop:repar} that 
\[ \J_n(\theta; u) = \frac{1 + \beta r(\beta)}{\beta \Gamma(\beta)} \sum_{i=1}^{n} \Bigl|\sum_{k=0}^{p}x_i^{k}u_{k+1} \Bigr|^{\beta}. \]
The Hellinger information's dependence on the covariates $(x_1,\ldots,x_n)$ is what makes our theory of optimal design possible.  


In what follows, we focus exclusively on the case of $\psi(\theta) = \theta$, and the direction-free definition of Hellinger distance in \eqref{eq:J.without.u.special}, though this is only for simplicity.  The same derivations can be carried out with any specific interest parameter $\psi(\theta)$ in mind.  

Following the now-standard approximate design theory put forth by \citet{kiefer1974general}, let $\xi$ denote a discrete probability measure defined on the design space---the space where the covariates $x_i$ live---with at most $m$ distinct atoms, representing the design itself.  That is, the atoms of $\xi$ represent the specific design points, and the probabilities correspond to the weights (more details below).  Next, with a slight abuse of our previous notation, we write $\J_{\xi}(\theta; u)$ to indicate that the Hellinger information in the direction $u$ depends on the design $\xi$ through the specific covariate values. For example, given design $\xi=\{(w_i, x_i): i=1,...,m\}$, then $\J_{\xi}(\theta; u)=\sum_{i=1}^{m} w_i J_{i}(\theta;u)$, where $J_{i}(\theta;u)$ is the Hellinger information in the direction $u$ based on one observation taken at location $x_i$. Following \eqref{eq:J.without.u.special}, the Hellinger information based on design $\xi$ is defined as 
\[ \J_{\xi}(\theta) = \inf_u \J_{\xi}(\theta; u). \]
Naturally, the optimal design under this setup would be defined as the one that maximizes this measure of information.  

\begin{defn}
\label{def:optimal}
Under the non-regular model setup presented above, the optimal design $\xi^\star$ is one which maximizes the Hellinger information, i.e., 
\[ \xi^\star = \arg\max_\xi \J_{\xi}(\theta). \]
\end{defn}

For comparison to the classical design theory, property \eqref{eq:special.bound} implies that our optimal design in Definition~\ref{def:optimal}, under a regular model, corresponds to an E-optimal design, one that maximizes the minimum eigenvalue of the Fisher information matrix.  For the non-regular case, however, we do not have an information matrix, so it is not clear if other common notions of optimality, such as A- and D-optimality, have any meaning.  For example, non-regularity will cause sampling distributions of estimators to be non-ellipsoidal, so we cannot expect the determinant of some information matrix to correspond to the volume of a confidence ellipsoid. 


Definition~\ref{def:optimal} formulates a new class of optimal design problems, deserving further attention.  As discussed briefly in Section~\ref{S:intro}, there is now a substantial literature on theory and computation related to the optimal design problem in regular cases, and we hope that this paper stimulates a parallel line of work with similar developments for non-regular cases.  There are some similarities to the regular case, in particular, the Hellinger information is non-negative and additive like Fisher information.  Also, the map $\xi \mapsto \J_\xi(\theta)$ is concave for fixed $\theta$, i.e., for any two designs $\xi$ and $\xi'$ and any $w \in [0,1]$, 
\begin{equation}
\label{eq:concave}
\J_{w \xi+(1-w)\xi'}(\theta) \geq w \J_{\xi}(\theta)+(1-w)\J_{\xi'}(\theta),
\end{equation} 
which is important for numerical and/or analytical solution of the optimal design problem.  The following gives some first results along these lines.

\subsection{A general result for non-regular polynomial models}
\label{SS:poly}

Motivated by the setup in \citet{smith1994nonregular}, we consider a non-regular model of the form 
\begin{equation}
\label{eq:poly}
y_i = g(x_i, \theta) + \eps_i, \quad i=1,\ldots,n,
\end{equation} 
where $x_i$ are scalars, $g(x,\theta) = \theta_0 + \sum_{k=1}^p \theta_k x^k$ is a degree-$p$ polynomial, $\theta \in \RR^d$, with $d=p+1$, is an unknown parameter, and $\eps_i$ are independent and identically distributed with density $p_0$ given in \eqref{eq:p0} and known shape parameter $\alpha \in [1,2)$.  As is customary \citep[e.g.,][]{koenker2001quantile}, we will insist that the design points be centered at the origin, which puts a constraint on the design itself.  In particular, we will consider the space of designs $\xi$ given by 
\[ \Xi = \{\xi = (w_i, x_i): \textstyle\sum_i w_i x_i = 0, x_i \in [-A,A]\}, \]
i.e., designs on $[-A,A]$ that are ``balanced'' in the sense that the mean $x$ value is 0, where $A > 0$ is fixed and known.  

The following result shows that, among balanced designs, the subclass of symmetric designs is complete in the sense that the maximum information over symmetric designs is the same as that over the larger class of balanced designs.  This implies that the search for an optimal design can be simplified by restricting it to the smaller class of symmetric designs.  

\begin{thm}
\label{thm:complete.class}
Let $\Xi_{\mathrm{sym}} \subset \Xi$ denote the set of all balanced designs that are also symmetric in the sense that if $x$ is a design point, then it assigns equal weight to both $x$ and $-x$.  Then 
\[ \max_{\xi \in \Xi_{\mathrm{sym}}} \J_{\xi}(\theta) = \max_{\xi \in \Xi} \J_{\xi}(\theta). \]
\end{thm}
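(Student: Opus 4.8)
The plan is a symmetrization (reflection-averaging) argument built on the concavity property \eqref{eq:concave}. Since a symmetric design automatically satisfies the balance constraint $\sum_i w_i x_i = 0$, we have $\Xi_{\mathrm{sym}} \subseteq \Xi$, and hence $\max_{\Xi_{\mathrm{sym}}} \J_\xi(\theta) \le \max_{\Xi} \J_\xi(\theta)$ trivially. It therefore suffices to exhibit, for each $\xi \in \Xi$, a design $\xi' \in \Xi_{\mathrm{sym}}$ with $\J_{\xi'}(\theta) \ge \J_\xi(\theta)$. (Attainment of both maxima is not an issue: $\Xi$ and $\Xi_{\mathrm{sym}}$ are weakly closed subsets of the weakly compact set of probability measures on $[-A,A]$, and $\xi \mapsto \J_\xi(\theta)$ is weakly upper semicontinuous, being the pointwise infimum over $u$ of the weakly continuous maps $\xi \mapsto \J_\xi(\theta;u)$.)

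First I would record the explicit form of the information in this model. Writing $f(x) = (1, x, \ldots, x^p)^\top$, so that $g(x,\theta) = f(x)^\top \theta$ and $\dot g(x,\cdot) = f(x)$ does not depend on $\theta$, Propositions~\ref{J.shortcut} and \ref{prop:repar} give $J_i(\theta; u) = C\,|f(x_i)^\top u|^\alpha$ for a strictly positive constant $C = c\{1 + \alpha\, r(\alpha)\}$ that depends on neither $x_i$ nor $u$ nor $\theta$. Consequently, for any design $\xi = \{(w_i,x_i)\}$,
\[ \J_\xi(\theta; u) = C \sum_i w_i \, |f(x_i)^\top u|^\alpha, \qquad \J_\xi(\theta) = C \inf_{\|u\|=1} \sum_i w_i \, |f(x_i)^\top u|^\alpha, \]
which in particular does not depend on $\theta$, consistent with the form of the statement.

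The structural heart of the argument is invariance under reflection. Let $\xi^- = \{(w_i, -x_i)\}$ be the design obtained by reflecting every support point of $\xi$ through the origin; reflection preserves $[-A,A]$ and the balance constraint, so $\xi^- \in \Xi$ whenever $\xi \in \Xi$. Since $f(-x) = D f(x)$ with $D = \mathrm{diag}(1,-1,1,\ldots,(-1)^p)$ an orthogonal (indeed involutory) matrix, we have $f(-x_i)^\top u = f(x_i)^\top (Du)$, hence $\J_{\xi^-}(\theta; u) = \J_\xi(\theta; Du)$; and because $u \mapsto Du$ is a bijection of the unit sphere onto itself, taking the infimum over $u$ gives $\J_{\xi^-}(\theta) = \J_\xi(\theta)$. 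Now symmetrize: given $\xi \in \Xi$, set $\xi' = \tfrac12 \xi + \tfrac12 \xi^-$, which assigns equal weight to $x$ and $-x$ at every support point, so $\xi' \in \Xi_{\mathrm{sym}}$. By the concavity inequality \eqref{eq:concave} together with the reflection invariance just shown,
\[ \J_{\xi'}(\theta) \;\ge\; \tfrac12 \J_\xi(\theta) + \tfrac12 \J_{\xi^-}(\theta) \;=\; \J_\xi(\theta). \]
Taking the supremum over $\xi \in \Xi$ yields $\max_{\Xi_{\mathrm{sym}}} \J_\xi(\theta) \ge \max_{\Xi} \J_\xi(\theta)$, which combined with the trivial reverse inequality completes the proof.

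I expect the mathematical content above to be essentially routine once the information formula is in hand; the one place requiring care is the interaction of the infimum over directions $u$ with the reflection, since a priori that operation could spoil invariance — it does not, precisely because reflection of the design points corresponds to the orthogonal substitution $u \mapsto Du$ on the sphere. A secondary, purely bookkeeping point is that symmetrization can double the number of atoms, so if one insists on an a priori cap on the support size one should either allow a sufficiently large cap or invoke a Carath\'eodory-type reduction, noting that the set of attainable values $\{\sum_i w_i|f(x_i)^\top u|^\alpha\}_u$ is unchanged under restricting to designs with boundedly many support points.
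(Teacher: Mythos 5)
Your proposal is correct and follows essentially the same route as the paper's proof: reflection invariance of the directional information (your orthogonal substitution $u \mapsto Du$ is exactly the paper's componentwise sign flip $v_{k+1} = (-1)^k u_{k+1}$), followed by symmetrization via the mixture $\tfrac12\xi + \tfrac12\xi^-$ and the concavity inequality \eqref{eq:concave}. The extra remarks on attainment of the maxima and on the support-size bookkeeping are sensible additions that the paper leaves implicit.
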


\begin{proof}
See Appendix~\ref{SS:thm2}. 
\end{proof}

The next section applies this general result to identify optimal designs in some special cases of the non-regular polynomial regression model above.  The two results, Propositions~\ref{prop:linear} and \ref{prop:quadratic}, suggest that there is a de la Garza phenomenon \citep[e.g.,][]{delagarza1954} in the non-regular case as well, which would be an interesting theoretical topic to pursue in future work.

\section{Optimal designs for some non-regular regression models}
\label{S:special}

In this section, we apply the general result in Theorem~\ref{thm:complete.class} to identify optimal designs in two important special cases of the polynomial model, namely, linear and quadratic.  Throughout we assume the model stated in \eqref{eq:poly}, namely, that the regression model has non-negative errors with distribution having density of the form \eqref{eq:p0}, with known shape parameter $\alpha \in [1,2)$.  

\subsection{Linear model}

Consider the linear version of \eqref{eq:poly}, where $g(x,\theta) = \theta_0 + \theta_1 x$.  For linear models we have a strong intuition from the regular case as to what the optimal design might be. It turns out that the same result holds in the non-regular case as well. 
\begin{prop}
\label{prop:linear}
The optimal design $\xi^\star$, according to Definition~\ref{def:optimal}, for the non-regular linear regression model is the symmetric two-point design with weight $\frac12$ on $x=\pm A$.  
\end{prop}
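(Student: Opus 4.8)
The plan is to combine the reparametrization formula with the closed form of the Hellinger information for the error family and then invoke the complete-class result, reducing the whole problem to an elementary one-dimensional monotonicity statement. First I would record the Hellinger information for this model: since $g(x,\theta)=\theta_0+\theta_1 x$ has $\theta$-gradient $\dot g=(1,x)^\top$ independent of $\theta$, Propositions~\ref{J.shortcut} and \ref{prop:repar} give, for a single observation at $x$ and a unit vector $u=(u_1,u_2)$,
\[ J_x(\theta;u)=\kappa\,|u_1+xu_2|^\alpha,\qquad \kappa:=c\,\{1+\alpha\,r(\alpha)\}\in(0,\infty), \]
a positive constant that is moreover free of $\theta$ and of $x$. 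Hence for a design $\xi=\{(w_i,x_i)\}$ one has $\J_\xi(\theta;u)=\kappa\sum_i w_i|u_1+x_iu_2|^\alpha$ and $\J_\xi(\theta)=\kappa\inf_{\|u\|=1}\sum_i w_i|u_1+x_iu_2|^\alpha$, with $\kappa$ playing no role in the optimization. By Theorem~\ref{thm:complete.class} it suffices to maximize over $\Xi_{\mathrm{sym}}$, so I would fix an arbitrary symmetric balanced design, written as mass $p_j/2$ on each of $\pm x_j$ with $x_j\in[0,A]$ and $\sum_j p_j=1$.

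For such a $\xi$ and any unit $u$, set $s=|u_2|\ge 0$; grouping the atoms at $\pm x_j$ and using that $|u_1+x_ju_2|^\alpha+|u_1-x_ju_2|^\alpha$ depends on $u_2$ only through $|u_2|$, one gets $\J_\xi(\theta;u)=\kappa\sum_j p_j\,G_u(x_j)$ with $G_u(t):=\tfrac12\{|u_1+ts|^\alpha+|u_1-ts|^\alpha\}$. The only substantive step is the \emph{key lemma}: for $\alpha\ge 1$ the map $t\mapsto G_u(t)$ is non-decreasing on $[0,\infty)$. I would prove it by differentiation when $\alpha>1$, writing $G_u'(t)=\tfrac{\alpha s}{2}\{\zeta(u_1+ts)-\zeta(u_1-ts)\}$ with $\zeta(z)=|z|^{\alpha-1}\sign(z)$ non-decreasing (since $\alpha-1\ge 0$) and $u_1+ts\ge u_1-ts$ because $t,s\ge 0$, so that $G_u'\ge 0$; for $\alpha=1$ the identity $|a+b|+|a-b|=2\max(|a|,|b|)$ gives $G_u(t)=\max(|u_1|,ts)$ directly. (Alternatively, convexity of $z\mapsto|z|^\alpha$ can be used.) Since $x_j\le A$ for every $j$, the lemma yields $G_u(x_j)\le G_u(A)$, hence
\[ \J_\xi(\theta;u)=\kappa\sum_j p_j\,G_u(x_j)\;\le\;\kappa\,G_u(A)\;=\;\J_{\xi^\star}(\theta;u), \]
the last equality because $\xi^\star$ puts mass $\tfrac12$ on each of $\pm A$ and so equals $\kappa\,G_u(A)$.

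Taking the infimum over unit $u$ on both sides gives $\J_\xi(\theta)\le\J_{\xi^\star}(\theta)$ for every symmetric balanced $\xi$. Since $\xi^\star\in\Xi_{\mathrm{sym}}\subset\Xi$ — it is balanced, $\tfrac12 A+\tfrac12(-A)=0$ — and $\J_{\xi^\star}(\theta)=\kappa\inf_{\|u\|=1}G_u(A)>0$ (indeed $G_u(A)=0$ would force $u_1+As=u_1-As=0$, impossible for a unit vector, and $u\mapsto G_u(A)$ is continuous on the compact unit circle, so the infimum is attained), Theorem~\ref{thm:complete.class} then gives $\J_{\xi^\star}(\theta)=\max_{\xi\in\Xi}\J_\xi(\theta)$, which is the assertion. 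I do not anticipate a real obstacle here: the complete-class theorem carries the combinatorial weight, and after the reduction the argument collapses to the one-line monotonicity of $G_u$; the only points needing a little care are the reduction to $s=|u_2|$ and verifying that the worst-case direction is attained so that ``$\inf_u$'' may be manipulated freely. If uniqueness of $\xi^\star$ is also wanted, one adds that for $\alpha>1$ strict convexity of $|\cdot|^\alpha$ makes $G_u$ strictly increasing whenever $u_2\ne 0$, forcing all mass onto $\pm A$, while the boundary case $\alpha=1$ requires the separate observation that only the $\pm A$ design makes the minimizing direction as costly as possible.
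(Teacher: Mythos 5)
Your proof is correct, and its overall architecture matches the paper's: invoke Theorem~\ref{thm:complete.class} to restrict to symmetric balanced designs, then establish the pointwise-in-$u$ domination $\J_{\xi}(\theta;u)\le\J_{\xi^\star}(\theta;u)$ and pass to the infimum. Where you differ is in how the central inequality
\[
|u_1+Au_2|^\alpha+|u_1-Au_2|^\alpha \;\ge\; |u_1+x u_2|^\alpha+|u_1-x u_2|^\alpha,\qquad 0\le x\le A,
\]
is proved. The paper reparametrizes the unit vector by $B=|u_1|/|u_2|$ and runs a three-case analysis on the relative positions of $x_i$, $A$, and $B$, treating $\alpha=1$ and $\alpha>1$ separately inside each case (with the sub-case $0\le A-B<B-x_i$ requiring an extra superadditivity argument for $z\mapsto z^\alpha$). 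Your key lemma — that $t\mapsto G_u(t)=\tfrac12\{|u_1+ts|^\alpha+|u_1-ts|^\alpha\}$ is non-decreasing on $[0,\infty)$ because $G_u'(t)=\tfrac{\alpha s}{2}\{\zeta(u_1+ts)-\zeta(u_1-ts)\}\ge 0$ with $\zeta(z)=|z|^{\alpha-1}\sign(z)$ monotone — collapses all of that into one line, with the $\max$ identity handling $\alpha=1$. This is a genuine simplification: it removes the case analysis entirely, works uniformly in the sign pattern of $u$, and makes transparent that the only property of the exponent being used is convexity of $z\mapsto|z|^\alpha$ (equivalently monotonicity of its derivative), which is exactly the hypothesis $\alpha\ge 1$. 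The one point to state explicitly if you write this up is that $|z|^\alpha$ is differentiable at $z=0$ for $\alpha>1$, so $G_u$ is genuinely $C^1$ and the derivative argument is legitimate across the points where $u_1\pm ts$ changes sign; your closing remarks on attainment of the infimum and on uniqueness are correct but not needed for the statement as posed.
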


\begin{proof}
See Section~S2.4 in the Supplementary Material. 
\end{proof}

\subsection{Quadratic model}

Consider a quadratic case where $g(x,\theta) = \theta_0 + \theta_1 x + \theta_2 x^2$.  Here we restrict our attention to the case where the errors $\eps_i$ in the model are exponential, $\alpha=1$.  

\begin{prop}
\label{prop:quadratic}
For the quadratic model, with $\alpha=1$ and the balanced design constraint, the optimal design $\xi^\star$, according to Definition~\ref{def:optimal}, is one with three distinct points $\{-A, 0, A\}$ with respective weights $\{\frac{1-\pi}{2}, \pi, \frac{1-\pi}{2}\}$ for some $\pi \in (0,1)$. 
\end{prop}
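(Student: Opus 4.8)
The plan is to reduce the search space to a one--parameter family and then optimize within it. First I would make the Hellinger information explicit: by Proposition~\ref{prop:repar} with $g(x,\theta)=\theta_0+\theta_1x+\theta_2x^2$, so that $\dot g(\theta)=(1,x,x^2)^\top$, together with Proposition~\ref{J.shortcut} for exponential errors (which yields a constant $\tilde J$, since $\alpha=1$ forces $r(\alpha)=0$), the Hellinger information of one observation at $x$ in direction $u=(u_1,u_2,u_3)$ is proportional to $|u_1+u_2x+u_3x^2|$ and does not depend on $\theta$. Hence, up to a fixed positive factor, $\J_\xi(\theta)=\inf_{\|u\|=1}\sum_i w_i\,|u_1+u_2x_i+u_3x_i^2|$, and the problem of Definition~\ref{def:optimal} becomes the maximization of this concave functional over $\xi\in\Xi$. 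Invoking Theorem~\ref{thm:complete.class}, it suffices to restrict to symmetric balanced designs; for such a design, with an atom of weight $\pi_0$ at $0$ and weight $w_j$ shared equally by $\pm a_j$, the paired contributions collapse via $\tfrac12|v-c|+\tfrac12|v+c|=\max(|v|,|c|)$, giving $\J_\xi(\theta;u)=\pi_0|u_1|+\sum_j w_j\max(|u_1+a_j^2u_3|,\,a_j|u_2|)$.

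Next I would handle the symmetric three--point designs, which are necessarily of the form $\{-a,0,a\}$ with $a\le A$. For these, a direct evaluation of the inner infimum---minimizing the piecewise--linear, direction--dependent expression above over the unit sphere by examining its kink surfaces---gives, after some algebra, a minimum of two terms, one decreasing and one increasing in $\pi$ and both strictly increasing in $a$; hence the optimum among three--point symmetric designs is attained at $a=A$, with the stated weights. The substantive step is to show that no symmetric design with more than three support points can do strictly better. I would establish this by a de la Garza--type complete--class reduction \citep{delagarza1954}: show that, for any symmetric $\xi$, reallocating its mass onto $\{-A,0,A\}$ cannot decrease $\inf_u\J_\xi(\theta;u)$. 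Equivalently, one can verify an equivalence--theorem/saddle--point characterization of the concave program $\max_\xi\J_\xi(\theta)$---exhibit a probability measure $\nu^\star$ on the unit sphere, supported on the directions attaining $\inf_u\J_{\xi^\star}(\theta;u)$ for the proposed $\xi^\star$, such that the sensitivity function $x\mapsto\int|u_1+u_2x+u_3x^2|\,\nu^\star(du)$ is maximized over $[-A,A]$ exactly on $\{-A,0,A\}$; concavity of $\xi\mapsto\J_\xi(\theta)$ (cf.~\eqref{eq:concave}) and the balance constraint then force $\xi^\star$ to be optimal and fix the weight $\pi$ as the solution of the equation that equalizes the competing minimal values.

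The hard part will be exactly this reduction. In contrast to the regular case there is no Fisher information matrix, and hence no Loewner--ordering machinery; instead $\J_\xi(\theta)$ is a nonsmooth functional defined through an infimum over directions, and pushing design mass outward toward $\pm A$ helps some directions $u$ while hurting others, so the argument must track the worst--case direction throughout. Concretely, I would first isolate the directions that attain the infimum for $\xi^\star$---those with $u_1=0$ (which balance $A^2|u_3|$ against $A|u_2|$) and the direction with $u_2=0$, $u_1+A^2u_3=0$---show that the infimum is attained only there, build $\nu^\star$ from them, and then check the global inequality on the sensitivity function; controlling that inequality across the interior of $[-A,A]$, in the absence of a quadratic structure, is where the real work lies.

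Finally, I would confirm $\pi\in(0,1)$ by ruling out the two degenerate endpoints. If $\pi=1$ the design is the point mass at $0$ and $\J_\xi(\theta)=\inf_{\|u\|=1}|u_1|=0$ (take $u=(0,1,0)$); if $\pi=0$ the design is the two--point design on $\{\pm A\}$, which cannot identify a quadratic, so $\J_\xi(\theta)=\inf_{\|u\|=1}\max(|u_1+A^2u_3|,A|u_2|)=0$ (take $u\propto(A^2,0,-1)$). Since $\J_\xi(\theta)>0$ for every $\pi\in(0,1)$---the three points $\{-A,0,A\}$ form a nonsingular design for quadratic regression---and $\pi\mapsto\J_\xi(\theta)$ is continuous, the maximizing weight lies strictly inside $(0,1)$, completing the proof.
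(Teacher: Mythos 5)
Your reduction to symmetric designs via Theorem~\ref{thm:complete.class}, and the identity $\tfrac12|v-c|+\tfrac12|v+c|=\max(|v|,|c|)$, match the paper's setup; but from there the two arguments diverge completely. The paper proves the proposition by a direct, direction-by-direction domination: for every $u$ and every pair $\pm x_i$ it claims a mixing ratio $r_i$ with $2r_i|f_u(0)|+(1-r_i)\{|f_u(A)|+|f_u(-A)|\}\ge|f_u(x_i)|+|f_u(-x_i)|$, established by a seven-case analysis on the roots of the convex quadratic $f_u(x)=u_1+u_2x+u_3x^2$, and then sums over $i$. You instead propose an equivalence-theorem certificate: a measure $\nu^\star$ on the worst-case directions of $\xi^\star$ whose sensitivity function $\Phi(x)=\int|f_u(x)|\,\nu^\star(du)$ peaks exactly on $\{-A,0,A\}$. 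The gap is that you never produce $\nu^\star$ or check the inequality---you explicitly defer ``where the real work lies''---and in fact the check cannot be completed. Take $A=1$, so the candidate is $\pi=\tfrac12$ with value $\J_{\xi^\star}(\theta)=1/(2\sqrt2)$, attained (up to sign) only at $u=(0,1,\pm1)/\sqrt2$ and $u=(1,0,-1)/\sqrt2$. Requiring $\Phi(\pm1)=\Phi(0)\le 1/(2\sqrt2)$ forces the weights $\tfrac14,\tfrac14,\tfrac12$ on these three directions, and then $\Phi(x)=(1+|x|-x^2)/(2\sqrt2)$, which is maximized at $x=\pm\tfrac12$ with value $5/(8\sqrt2)>1/(2\sqrt2)$. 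By the very optimality criterion you invoke, the certificate fails.

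Worse, following that failure to its conclusion refutes the statement rather than proving it: the symmetric balanced design $\{(0,0.40),(\pm\tfrac12,0.08),(\pm1,0.22)\}$ has $\J_\xi(\theta)=0.52/\sqrt2\approx0.368$, strictly larger than $1/(2\sqrt2)\approx0.354$. (Since $\J_\xi(\theta;u)=\sum_iw_i|\langle v_i,u\rangle|$ with $v_i=(1,x_i,x_i^2)$ is the support function of a zonotope, its minimum over the unit sphere is attained at a direction orthogonal to two generators; checking all such pairs for this design gives $0.52/\sqrt2$ at $(0,1,\pm1)/\sqrt2$ and $(1,0,-1)/\sqrt2$ and larger values elsewhere, e.g.\ $1.72/\sqrt{17}$ at $(1,0,-4)/\sqrt{17}$ and $0.92/\sqrt6$ at $(1,\mp1,-2)/\sqrt6$.) So neither your plan nor any other can establish the proposition as stated. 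It is worth noting that the paper's own argument founders on the same rock: its inequality (the one labelled (Obj.Quada1) in the supplement) must hold with $r_i$ \emph{independent of $u$} for the dominating three-point design to be well defined, yet for $A=1$, $x_i=\tfrac12$ the direction $(0,1,-1)/\sqrt2$ forces $r_i\le\tfrac12$ while $(1,0,-1)/\sqrt2$ forces $r_i\ge\tfrac34$. Your instinct that ``pushing mass outward helps some directions and hurts others'' was exactly right; the resolution is that interior support points are genuinely needed, and the de la Garza-type reduction you hoped for does not hold here.
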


\begin{proof}
See Section~S2.5 in the Supplementary Material. 
\end{proof}

Although the proof of Proposition~\ref{prop:quadratic} holds only for the $\alpha=1$ case, we expect that the result also holds for $\alpha \in [1,2)$, and the numerical results in Figure~\ref{fig:Quad} (b) support this conjecture.  The practical importance is that it simplifies the search over $\Xi_{\mathrm{sym}}$ to a search over the scalar $\pi \in [0,1]$. The weight at point $\{0\}$ of the optimal design---or the {\em likely} optimal design for the case of $\alpha\in(1,2]$---depends on the value of $A$ and $\alpha$. Based on Proposition~\ref{prop:quadratic} and the definition of Hellinger information, the optimal weight can be obtained by solving the optimization problem
\begin{equation}
\pi_{A}(\alpha)=\arg \max_{\pi\in[0,1]} f(\pi), 
\end{equation}
where $f(\pi) = f_{\alpha,A}(\pi)$ is given by 
\[ f(\pi) = \min_{\|u\|=1} \Bigl\{ \pi|u_1|^{\alpha}+\tfrac{1-\pi}{2}\bigl( |u_1+A u_2+A^2 u_3|^{\alpha}+ |u_1-A u_2+A^2 u_3|^{\alpha} \bigr) \Bigr\}. \]
This search for the optimal weight, $\pi_{A}(\alpha)$, along with that over $u$ on the surface of the unit sphere, can be handled numerically. 

Figure~\ref{Opt.Pi} shows $\alpha \mapsto \pi_A(\alpha)$ for several values of $A$.  In particular, we see that the (likely) optimal designs put more weight on 0 as either $\alpha$ or $A$ increases. Our optimal designs for non-regular regression models have a similar format to their E-optimal counterparts in the regular case.  That is, a regular E-optimal design for quadratic regression over $[-A,A]$ is given by 
\[ \bigl\{(-A,\tfrac{1-w_A}{2}),(0,w_A),(A,\tfrac{1-w_A}{2})\bigr\}, \]
and, for $A$ in $\{1,1.5,2\}$, the corresponding values of $w_A$ are $\{0.6, 0.75, 0.81\}$.  From Figure~\ref{Opt.Pi}, as anticipated by Corollary~\ref{cor:bound}, we observe that for $\alpha=2$, $\pi_A(2)$ matches the weight $w_A$ of the corresponding regular E-optimal design. This is explained by Corollary 1; when $\alpha=2$, optimal design under Hellinger information is the E-optimal design.

Henceforth, we call the regular E-optimal design counterpart of a non-regular model ``regular-optimal." For the non-regular linear  model, based on Proposition~\ref{prop:linear}, the optimal design coincides with the ``regular-optimal" design. In the numerical results presented below, we compare optimal designs of non-regular quadratic models to their ``regular-optimal'' counterparts.

\begin{figure}
    \centering
   \includegraphics[scale=0.4]{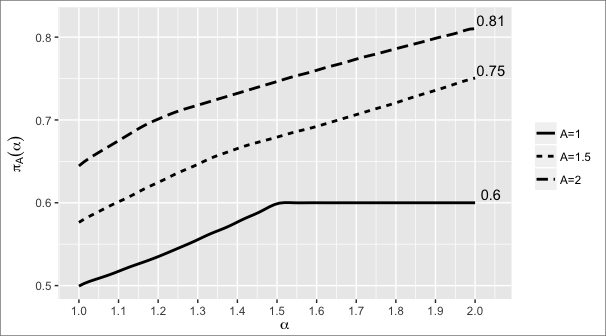}
    \caption{Optimal weight $\pi_{A}(\alpha)$ as a function of $\alpha$ for several $A$ values.}
    \label{Opt.Pi}
\end{figure}

\subsection{Numerical results}
\label{SS:numerical}

Here we show some numerical results to demonstrate the efficiency gain in using the proposed optimal designs over other reasonable designs.  Recall our model is of the form \eqref{eq:poly} with non-negative errors having density \eqref{eq:p0}, with known shape parameter $\alpha \in [1,2)$. 

One complication is that currently there are no results that identify an estimator whose risk attains the lower bound in Theorem~\ref{thm:bound}.  Consequently, we are currently unable to guarantee that minimizing this lower bound will result in improved estimation for any {\em given} estimator.  But we do have a reasonable estimator, described next, and the results below do indicate that the design that minimizes the lower bound in Theorem~\ref{thm:bound} does indeed result in improved efficiency for this particular estimation.  

For the class of non-regular polynomial regression problems in consideration here, \citet{smith1994nonregular} proposed an estimator based on solving a linear programming problem: choosing $(\theta_0,\ldots,\theta_p)$ such that $\theta_0$ is maximized subject to the condition that $y_i \geq \sum_{k=1}^p \theta_k x_i^k$ for each $i=1,\ldots,n$.  This estimator agrees with the maximum likelihood estimator in the case $\alpha=1$, has a $O(n^{-1/\alpha})$ convergence rate, which matches the one given by the lower bound in \eqref{eq:minimax}, and can be readily computed using the {\tt quantreg} package in R \citep{Rquantreg}.  Moreover, as \citet[][p.~174]{smith1994nonregular} argues, it is generally superior to maximum likelihood in non-regular cases. For these reasons, comparisons of designs based on this estimator ought to be informative. 

Figure~\ref{fig:Lin} presents simulation results on the quality of estimation for the Hellinger optimal design versus 5-, 10-, and 15-point uniform designs for the non-regular linear models, while Figure~\ref{fig:Quad} presents simulation results comparing Hellinger optimal design versus 5-point uniform design and the regular-optimal design. The study proceeds as follows.  For each design space $[-A,A]$ and candidate design, the $n$-vector $y$ is simulated from the corresponding model, with the specified value of $\alpha$ and $\theta$, and then Smith's estimator $\hat\theta$ is computed.  Repeat this process 1000 times and compute the Monte Carlo estimate of the risk $R(\hat\theta, \theta)$ as usual.  This risk is the sum of mean square errors for each component of the parameter vector.


\begin{figure}
    \centering
    \subfigure[Linear model, $\alpha=1$]{{\includegraphics[width=12cm]{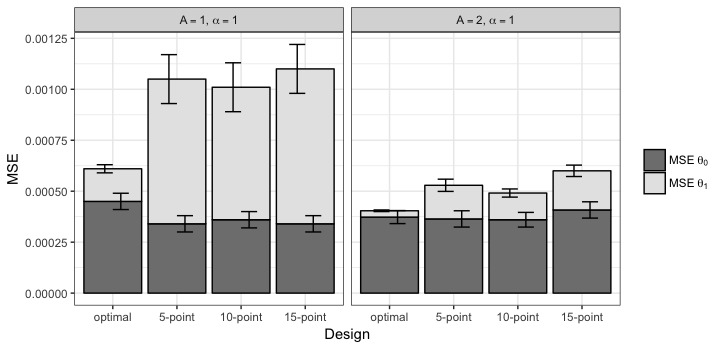}  }}\\
    \subfigure[Linear model, $\alpha=1.4$]{{\includegraphics[width=12cm]{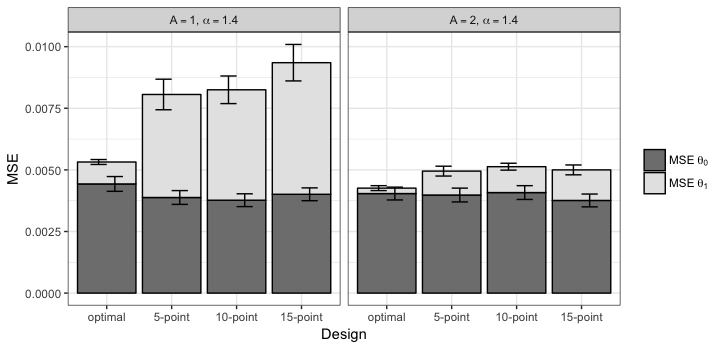} }}%
    \caption{Comparison for non-regular linear model, based on $n=120$ and $\theta=(6, 0.5)$.}%
    \label{fig:Lin}%
\end{figure}

Figure~\ref{fig:Lin} shows that, under different regularity conditions, the optimal design from Proposition~\ref{prop:linear} is superior in terms of risk. In particular, it is significantly better in the estimation of the slope, $\theta_1$, whereas no design performs significantly better than the others in the estimation of the intercept. The results presented in Figure~\ref{fig:Quad}(a) are consistent with Proposition~\ref{prop:quadratic} in the case of $\alpha=1$.  In each case, the optimal design performs significantly better than both the 5-point uniform design and the regular-optimal design, despite the similarity of the optimal and regular-optimal designs in terms of weight at point 0. Similarly, Figure~\ref{fig:Quad}(b) supports our intuition that Proposition~\ref{prop:quadratic} can be extended to cases with $\alpha>1$.

\begin{figure}
    \centering
    \subfigure[Left panel: $\pi_1(1)=0.5$ and the regular-optimal design is $\{(-1,0.2),(0,0.6),(1,0.2)\} $; Right panel: $\pi_2(1)=0.75$ and the regular-optimal design is $\{(-2,0.095),(0,0.81),(2,0.095)\}$]
    {{\includegraphics[width=12cm]{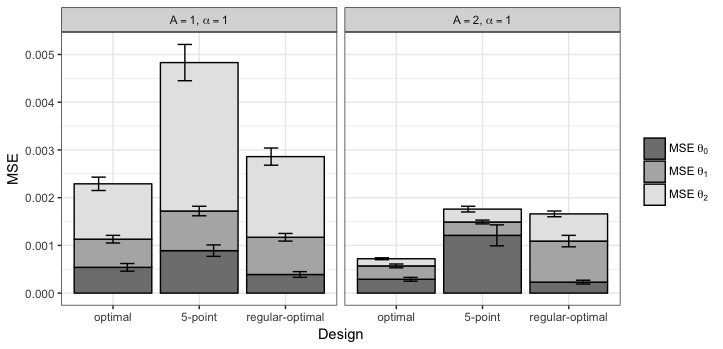}  }}\\
    \subfigure[Left panel: $\pi_{1.5}(1.1)=0.6$ and the  regular-optimal design is $\{(-1.5,0.125),(0,0.75),(1.5,0.125)\} $; Right panel: $\pi_2(1.5)=0.75$ and the regular-optimal design is : $\{(-2,0.095),(0,0.81),(2,0.095)\}$]
    {{\includegraphics[width=12cm]{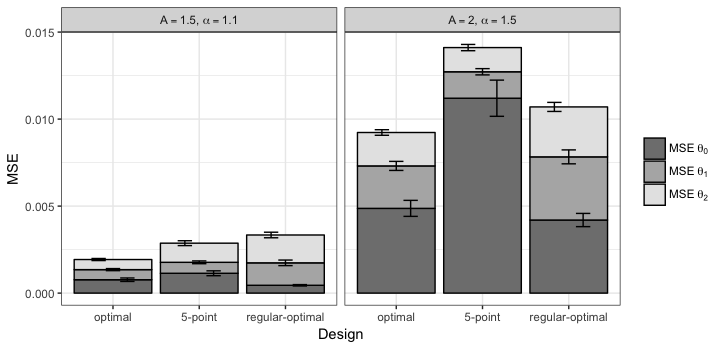} }}%
    \caption{Comparison for non-regular quadratic model, based on $n=120$ and $\theta=(2, 4, 0.8)$.}%
    \label{fig:Quad}%
\end{figure}

\section{Conclusion}
\label{S:discuss}

This paper aims to establish a framework for optimal design in the context of non-regular models where the Fisher information matrix does not exist.  Towards this goal, we defined an alternative measure of information, based on a local approximation of the squared Hellinger distance between models, suitable for non-regular problems.  The proposed Hellinger information has some close connection to the Fisher information when both exist and, more generally, the former has many of the familiar properties of the latter.  In particular, in Theorem~\ref{thm:bound} we establish a parallel to the classical Cram\'er--Rao inequality which connects our proposed Hellinger information measure to the quality of estimators.  This naturally leads to a notion of optimal designs in non-regular problems, i.e., the ``optimal design'' is one that minimizes the lower bound in Theorem~\ref{thm:bound}.  

The proposed optimal design framework introduces a new class of optimization problems to solve, what we have considered here is only the tip of the iceberg.  However, the tools currently available in the optimal design literature for regular problems are expected to be useful here.  For example, in a particular non-regular polynomial regression setting, we establish a theorem to simplify the numerical and/or analytical search for a particular optimal design, and we apply this general result in the linear and quadratic cases.  Developing the theory and computational methods to handle more complex non-regular models, as well as identifying estimators that attain the lower bound \eqref{eq:minimax}, are interesting topics for future investigation.  

Aside from creating a new class of design problems to be investigated, the developments here also shed light on how much our current understanding of design problems depends on the regularity of the models being considered.  That is, beyond its value in helping us tackle specific cases in which regularity conditions do not apply, the study of non-regular problems also deepens our understanding of regularity itself and how it affects optimal design. For example, questions about the type of optimality criterion to consider (e.g., A- versus D- versus E-optimal) are apparently only relevant for those regular cases where the Fisher information matrix is exactly or approximately related to the dispersion matrix of an estimator. While this paper provides some important insights about non-regular models and corresponding optimal design problems, there is still much more to be done.

\section*{Acknowledgments}

The authors are grateful for the helpful suggestions from the Associate Editor referees, and Professor Arkady Shemyakin.  The authors also thank Mr.~Zhiqiang Ye for pointing out a mistake in the statement and proof of Proposition~\ref{J.shortcut} in a previous version.

\appendix

\section{Proofs of theorems}
\label{S:proofs1}

\subsection{Proof of Theorem~\ref{thm:bound}}
\label{SS:thm1}

The proof requires a connection between Hellinger distance and risk of an estimator. This first step is based in part on Section~I.6 of \citet{ibragimov1981statistical}, although our setup and conclusions are more general in certain ways. We summarize this in the following lemma, proved in the Supplementary Material.  

\begin{lem}
\label{lem:ih}
For data $Y \in \YY$, consider a model $P_\theta$, with $\mu$-density $p_\theta$, indexed by a parameter $\theta \in \Theta \subseteq \RR^d$.  Let $\psi = \psi(\theta)$ be the interest parameter, where $\psi: \RR^d \to \RR^q$.  For an estimator $T=T(Y)$ of $\psi$, the risk function $R_\psi(T,\theta)$ for the estimator $T$ satisfies 
\[ R_\psi(T,\theta) + R_\psi(T,\vartheta) \geq \min\Bigl\{ \frac{1-h(\theta;\vartheta)}{4h(\theta; \vartheta)}, \frac{1}{16} \Bigr\} \|\psi(\theta) - \psi(\vartheta)\|^2. \]
\end{lem}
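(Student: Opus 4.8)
\emph{Proof proposal.} I would prove this by a two-point (Le~Cam-style) argument, which is also the route taken in Section~I.6 of \citet{ibragimov1981statistical}. Fix $\theta$ and $\vartheta$ and write $\Delta = \|\psi(\theta) - \psi(\vartheta)\|$; the claimed inequality is vacuous if $\Delta = 0$ or if one of the two risks is infinite, so I may assume $\Delta > 0$ and both risks finite. The heart of the argument is a pointwise minimization: for every $y \in \YY$,
\[
\min_{a \in \RR^q} \bigl\{ \|a - \psi(\theta)\|^2\, p_\theta(y) + \|a - \psi(\vartheta)\|^2\, p_\vartheta(y) \bigr\} = \frac{p_\theta(y)\, p_\vartheta(y)}{p_\theta(y) + p_\vartheta(y)}\, \Delta^2 ,
\]
with the convention $0/0 := 0$; indeed the minimizer is the density-weighted average of $\psi(\theta)$ and $\psi(\vartheta)$, and if one of the densities vanishes at $y$ the minimum is $0$. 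Applying this bound at $a = T(y)$ and integrating against $\mu$, using that $R_\psi(T,\theta) + R_\psi(T,\vartheta) = \int \{\|T(y) - \psi(\theta)\|^2 p_\theta(y) + \|T(y) - \psi(\vartheta)\|^2 p_\vartheta(y)\}\, d\mu(y)$, I obtain
\[
R_\psi(T,\theta) + R_\psi(T,\vartheta) \ge \Delta^2 \int \frac{p_\theta\, p_\vartheta}{p_\theta + p_\vartheta}\, d\mu .
\]

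Next I would turn the remaining integral into a Hellinger quantity. Writing $\rho = \int (p_\theta p_\vartheta)^{1/2}\, d\mu$ for the Hellinger affinity, a single application of Cauchy--Schwarz gives
\[
\rho = \int \frac{(p_\theta p_\vartheta)^{1/2}}{(p_\theta + p_\vartheta)^{1/2}}\,(p_\theta + p_\vartheta)^{1/2}\, d\mu \le \Bigl( \int \frac{p_\theta p_\vartheta}{p_\theta + p_\vartheta}\, d\mu \Bigr)^{1/2} \Bigl( \int (p_\theta + p_\vartheta)\, d\mu \Bigr)^{1/2} = \sqrt{2}\,\Bigl( \int \frac{p_\theta p_\vartheta}{p_\theta + p_\vartheta}\, d\mu \Bigr)^{1/2} ,
\]
hence $\int \frac{p_\theta p_\vartheta}{p_\theta + p_\vartheta}\, d\mu \ge \tfrac12 \rho^2$. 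Since $\rho = 1 - \tfrac12 h(\theta;\vartheta)$ by the definition of $h$, combining the last two displays yields the clean intermediate bound
\[
R_\psi(T,\theta) + R_\psi(T,\vartheta) \ge \tfrac12\bigl(1 - \tfrac12 h(\theta;\vartheta)\bigr)^2\, \|\psi(\theta) - \psi(\vartheta)\|^2 .
\]
(One could instead mimic Ibragimov--Khasminskii more literally: put $A = \{y : \|T(y) - \psi(\theta)\| < \Delta/2\}$, use the triangle inequality to get $R_\psi(T,\theta) \ge \tfrac{\Delta^2}{4} P_\theta(A^c)$ and $R_\psi(T,\vartheta) \ge \tfrac{\Delta^2}{4} P_\vartheta(A)$, and then bound $P_\theta(A^c) + P_\vartheta(A) \ge \int \min(p_\theta, p_\vartheta)\, d\mu$ before relating that to $h$; the pointwise-minimization version is cleaner and slightly sharper.)

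It then remains only to observe that $g(h) := \tfrac12 (1 - \tfrac12 h)^2$ dominates $\min\{\tfrac{1-h}{4h}, \tfrac1{16}\}$ on the admissible range $h \in [0,2]$: the function $g$ is decreasing with $g(1) = \tfrac18$, so $g(h) \ge \tfrac18 > \tfrac1{16} \ge \min\{\tfrac{1-h}{4h}, \tfrac1{16}\}$ whenever $h \le 1$, while for $h > 1$ the quantity $\tfrac{1-h}{4h}$ is nonpositive so the minimum is $\le 0 \le g(h)$. Substituting this into the intermediate bound completes the proof, and it also handles the disjoint-support case $h = 2$ (where the bound is trivially true) — a case that matters here since non-regular location models routinely have $\vartheta$-dependent support. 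I do not expect a genuine obstacle: the only points needing care are the verification of the pointwise identity and the behaviour of the integrand on the set where $p_\theta$ or $p_\vartheta$ vanishes (handled by the $0/0 := 0$ convention), plus measurability of $A$ in the alternative route. All the substantive content sits in the pointwise minimization and the one-line Cauchy--Schwarz step.
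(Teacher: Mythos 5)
Your proposal is correct, and it reaches the stated inequality by a genuinely different route than the paper. The paper's proof works through the mean function $m_\psi(\theta) = E_\theta(T)$: it writes $m_\psi(\theta)-m_\psi(\vartheta)$ as an integral against $p_\theta - p_\vartheta$, factors $p_\theta - p_\vartheta = (p_\theta^{1/2}+p_\vartheta^{1/2})(p_\theta^{1/2}-p_\vartheta^{1/2})$ before applying Cauchy--Schwarz to pull out $h(\theta;\vartheta)$, rearranges to get $R_\psi(T,\theta)+R_\psi(T,\vartheta) \ge \tfrac{1-h}{2h}\|m_\psi(\theta)-m_\psi(\vartheta)\|^2$, and then splits into cases according to whether the bias $b_\psi = m_\psi - \psi$ is small (yielding the $\tfrac{1-h}{4h}$ branch via $\|m_\psi(\theta)-m_\psi(\vartheta)\| \ge \tfrac12\|\psi(\theta)-\psi(\vartheta)\|$) or large (yielding the $\tfrac1{16}$ branch via $R_\psi(T,\theta) \ge \|b_\psi(\theta)\|^2$); the $\min\{\cdot,\cdot\}$ in the statement is precisely the trace of that case analysis. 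You instead run a two-point Bayes-risk argument: the pointwise minimization giving $\int \frac{p_\theta p_\vartheta}{p_\theta+p_\vartheta}\,d\mu$ (which I have checked, including the weighted-average minimizer and the degenerate cases), followed by one Cauchy--Schwarz step relating that integral to the Hellinger affinity $\rho = 1-\tfrac12 h$. This yields the single clean bound $\tfrac12(1-\tfrac12 h)^2\,\|\psi(\theta)-\psi(\vartheta)\|^2$ with no bias bookkeeping, and your final comparison of $\tfrac12(1-\tfrac12 h)^2$ against $\min\{\tfrac{1-h}{4h},\tfrac1{16}\}$ on $h\in[0,2]$ is verified correct (the crossover analysis at $h\le 1$ versus $h>1$ is exactly right). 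In fact your bound is uniformly at least as strong --- e.g.\ near $h=1$ the paper's constant degenerates to $0$ while yours stays at $\tfrac18$ --- so the lemma as stated follows a fortiori, and the downstream use in Theorem~\ref{thm:bound} (which only invokes the $\tfrac1{16}$ branch when $h^n \le \tfrac34$) is unaffected. What the paper's route buys in exchange is an explicit statement in terms of the estimator's bias, closer in spirit to the Chapman--Robbins/Cram\'er--Rao tradition; what yours buys is a sharper constant, no finiteness assumption on $E_\theta(T)$ beyond finite risk, and a one-line treatment of the $\vartheta$-dependent-support cases that are the paper's main concern.
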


For the proof of Theorem~\ref{thm:bound}, start with the squared Hellinger distance between joint distributions $P_\theta^n$ and $P_\vartheta^n$, given by 
\[ h^n(\theta; \vartheta) := H^2(P_\theta^n, P_\vartheta^n) = 2 \Bigl[ 1 - \prod_{i=1}^n \Bigl\{ 1 - \frac{h_i(\theta; \vartheta)}{2} \Bigr\} \Bigr], \]
where $h_i(\theta; \vartheta) = H^2(P_{i,\theta}, P_{i,\vartheta})$ is the squared Hellinger distance between individual components.  If $\theta$ and $\vartheta$ are sufficiently close, in the sense that $h_i(\theta; \vartheta) \leq 1$ for each $i=1,\ldots,n$, then, given the following inequalities, 
\[ 1-x \leq -\log x \quad \text{and} \quad -\log(1-x) \leq 2x, \quad x \in [0,1/2], \]
it follows that 
\begin{equation}\label{additive}
h^n(\theta; \vartheta) \leq -2\sum_{i=1}^n \log\Bigl\{1 - \frac{h_i(\theta; \vartheta)}{2} \Bigr\} \leq 2 \sum_{i=1}^n h_i(\theta; \vartheta).
\end{equation}
According to our assumption about local expansion of the individual $h_i$'s, if $\vartheta = \theta + \eps \, u$ for a unit vector $u$, then 
\[ h^n(\theta; \theta + \eps \, u) \leq 2 \J_n(\theta; u) \, \eps^\alpha + o(n\eps^\alpha), \quad \eps \to 0. \]
When we take $\eps$ equal to $\eps_{n,u} = \{3 \J_n(\theta; u)\}^{-1/\alpha}$, then we get 
\[ h^n(\theta; \theta + \eps_{n,u} \, u) \leq \tfrac23 + o(1), \quad n \to \infty, \]
where the latter ``$o(1)$'' conclusion is justified by the assumption \eqref{eq:accumulates} about the rate of information accumulation.  Therefore, for large enough $n$, with $\vartheta_{n,u} = \theta + \eps_{n,u} \, u$, $h^n(\theta; \vartheta_{n,u}) \leq \frac34$, it follows from the above lemma that 
\[ R_\psi(T_n, \theta) + R_\psi(T_n, \vartheta_{n,u}) \geq \tfrac{1}{16} \|\psi(\theta) - \psi(\vartheta_{n,u})\|^2. \]
Since $\psi$ is differentiable, there is a Taylor approximation at $\theta$:
\[ \psi(\theta) - \psi(\vartheta_{n,u}) = D_\psi(\theta) (\theta-\vartheta_{n,u}) + o(\|\theta-\vartheta_{n,u}\|), \]
where the latter little-oh means a $q$-vector whose entries are all of that magnitude.  Plugging in the definition of $\vartheta_{n,u}$ gives 
\[ \psi(\theta) - \psi(\theta + \eps_{n,u} \, u) = -\eps_{n,u} D_\psi(\theta) \, u + o(\eps_{n,u}), \quad n \to \infty, \]
and, hence, 
\[ \|\psi(\theta) - \psi(\theta + \eps_{n,u} \, u)\|^2 = \eps_{n,u}^2 \|D_\psi(\theta) \, u + o(1)\|^2 \geq \tfrac12 \eps_{n,u}^2 \|D_\psi(\theta) \, u\|^2. \]
Plugging in the definition of $\eps_{n,u}$ establishes that
\[ R_\psi(T_n, \theta + \eps_{n,u} u) + R_\psi(T_n, \theta) \gtrsim \|D_\psi(\theta) \, u\|^2 \, \J_n(\theta; u)^{-2/\alpha}. \]
Also, the constant that has been absorbed in ``$\gtrsim$'' is $(32)^{-1} 3^{-2/\alpha}$.  Finally, the claim \eqref{eq:minimax} follows from the above display and the general fact that, for a function $f$ defined on a set $A$, $f(y_1) + f(y_2)$ is smaller than $2 \sup_A f(y)$.

\subsection{Proof of Theorem~\ref{thm:complete.class}}
\label{SS:thm2}

Take any fixed design $\xi = \{(w_m, x_m): m=1,\ldots,M\}$, and define a function
\[ L(u; x) = \J_{\xi}(\theta; u) = \sum_{m=1}^M w_m \Bigl| \sum_{k=0}^p x_m^k u_{k+1} \Bigr|^\alpha. \]
The $L$ function does not depend on $\theta$ because it is based on the information in a location parameter problem, but it does depend implicitly on the $w$ component of the design $\xi$.  From the trivial identity, 
\[ a \, x_m^k = a \, (-1)^k \, (-x_m)^k, \quad \text{any $a \in \RR$, any $m$, and any $k$}, \]
it follows immediately that $L(u; x) = L(v; -x)$, for any unit vector $u \in \RR^{p+1}$, where $v_{k+1} = (-1)^k u_{k+1}$, $k=0,\ldots,p$.  Since this new vector $v$ is also a unit vector, we have 
\[ \min_u L(u; x) = \min_v L(v; -x). \]
This implies that the reflected design $\xi'$---the one that replaces the original $x_m$ in $\xi$ with $-x_m$, but keeps the same weights---satisfies $\J_{\xi}(\theta) = \J_{\xi'}(\theta)$.  Define the mixture design $\xi^\dagger = \frac12 \xi + \frac12 \xi'$, which is symmetric by construction, and by concavity \eqref{eq:concave} satisfies
\begin{align*}
\J_{\xi^\dagger}(\theta) & = \min_u \bigl\{ \tfrac12 \J_{\xi}(\theta; u) + \tfrac12 \J_{\xi'}(\theta; u) \bigr\} \\
& \geq \tfrac12 \min_u \J_{\xi}(\theta; u) + \tfrac12 \min_u \J_{\xi'}(\theta; u).
\end{align*}
We showed above that the two terms in the lower bound are equal and, consequently, $\J_{\xi^\dagger}(\theta) \geq \J_{\xi}(\theta)$.  Therefore, for any design $\xi$ there exists a symmetric design with Hellinger information at least as big; hence, symmetric designs form a complete class.


\ifthenelse{1=1}{
\bibliographystyle{apalike}
\bibliography{uictest}
}{}

\pagebreak

\section*{S \quad Supplementary material}

\subsection*{S1. A multi-parameter example}

As an illustrative example, consider the case where $\theta=(\theta_1,\theta_2)$ is two-dimensional and $P_\theta$ is $\unif(\theta_1, \theta_1 + \theta_2)$, where $\theta_1 \in \RR$ and $\theta_2 > 0$.  It is not difficult to show that 
\[ h(\theta, \vartheta) = 2\Bigl\{ 1 - \frac{(\theta_1 + \theta_2) \wedge (\vartheta_1 + \vartheta_2) - \theta_1 \vee \vartheta_1}{(\theta_2 \vartheta_2)^{1/2}} \Bigr\}. \]
Writing $\vartheta = \theta + \eps u$ for a unit vector $u=(u_1,u_2)$, and by considering all the possible configurations of $u$, it can be shown that $\alpha=1$ and 
\[ J(\theta; u) = \theta_2^{-1} g(u), \]
where $g(u)$ is a function that depends only on $u$, not on $\theta$.  This expression is consistent with what we would expect from the familiar Fisher information, since $\theta_1$ and $\theta_2$ are, in this formulation, location and scale parameters, respectively.  The function $g(u)$ is not complicated, just that the expression varies depending on where on the unit circle $u$ is.  For example, if $u$ is in the first or third quadrants, then 
\[ g(u) = |2u_1 + u_2|. \]
To derive the corresponding expressions for other values of $u$, the second and fourth quadrants need to be split in half along the line $y=-x$.  Figure~\ref{fig:Jplot}(a) shows a plot of $\omega \mapsto J(\theta; u_\omega)$, where $u_\omega = (\cos \omega, \sin \omega)$, as $\omega$ varies over $(0, 2\pi)$, for several $\theta$ values.  Notice that the shape of the function does not depend on $\theta$, only the scale, which means the value of $u$ that minimizes $J(\theta; u)$ does not depend on $\theta$.  This plot also reveals the symmetry with respect to reflections $u \to -u$ through the origin.  

To gain some intuition about the somewhat complicated lower bound established in Theorem~1, suppose that our goal is to estimate the scale parameter $\theta_2$, i.e., $\psi(\theta) = \theta_2$.  Then the relevant Hellinger information is 
\[ J^\psi(\theta; u) = \frac{J(\theta; u)}{|u_2|} = \frac{g(u)}{\theta_2 |u_2|}. \]
Figure~\ref{fig:Jplot}(b) plots $\omega \mapsto J^\psi(\theta; u_\omega)$ for three different $\theta$ values.  The minimum value of these functions would be the relevant Hellinger information for estimating $\theta_2$, and the horizontal lines drawn there correspond to $\theta_2^{-1}$.  Given independent data $Y_1,\ldots,Y_n$, an oracle who knows the value of $\theta_1$ and uses the maximum likelihood estimator of $\theta_2$ when $\theta_1$ is fixed and known, would have mean square error of the order $n^{-2} \theta_2^2$, which agrees with the theorem's lower bound up to constants.  

\begin{figure}
\centering
\subfigure[Plot of $u \mapsto J(\theta;u)$]{\scalebox{0.55}{\includegraphics{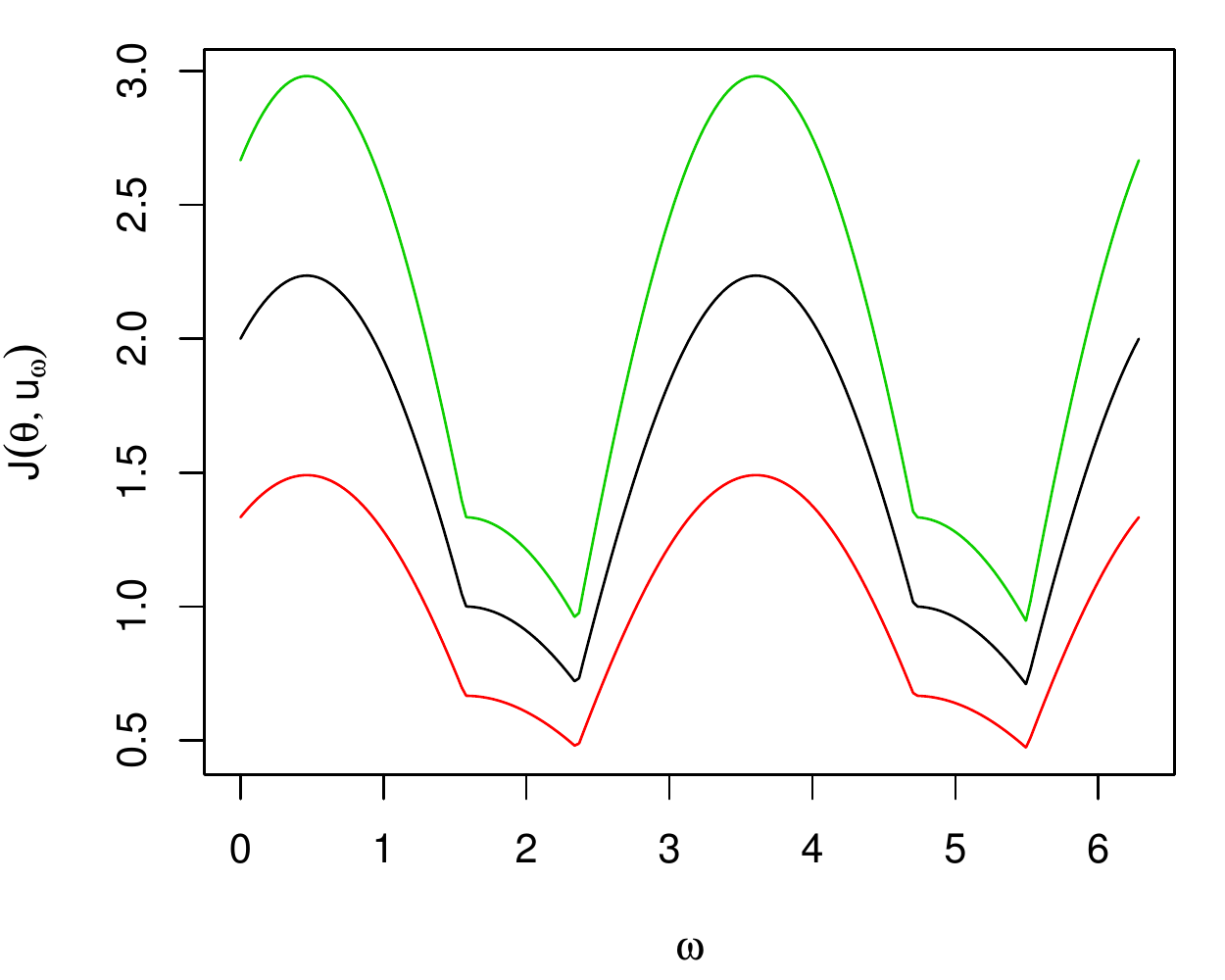}}}
\subfigure[Plot of $u \mapsto J^\psi(\theta; u)$]{\scalebox{0.55}{\includegraphics{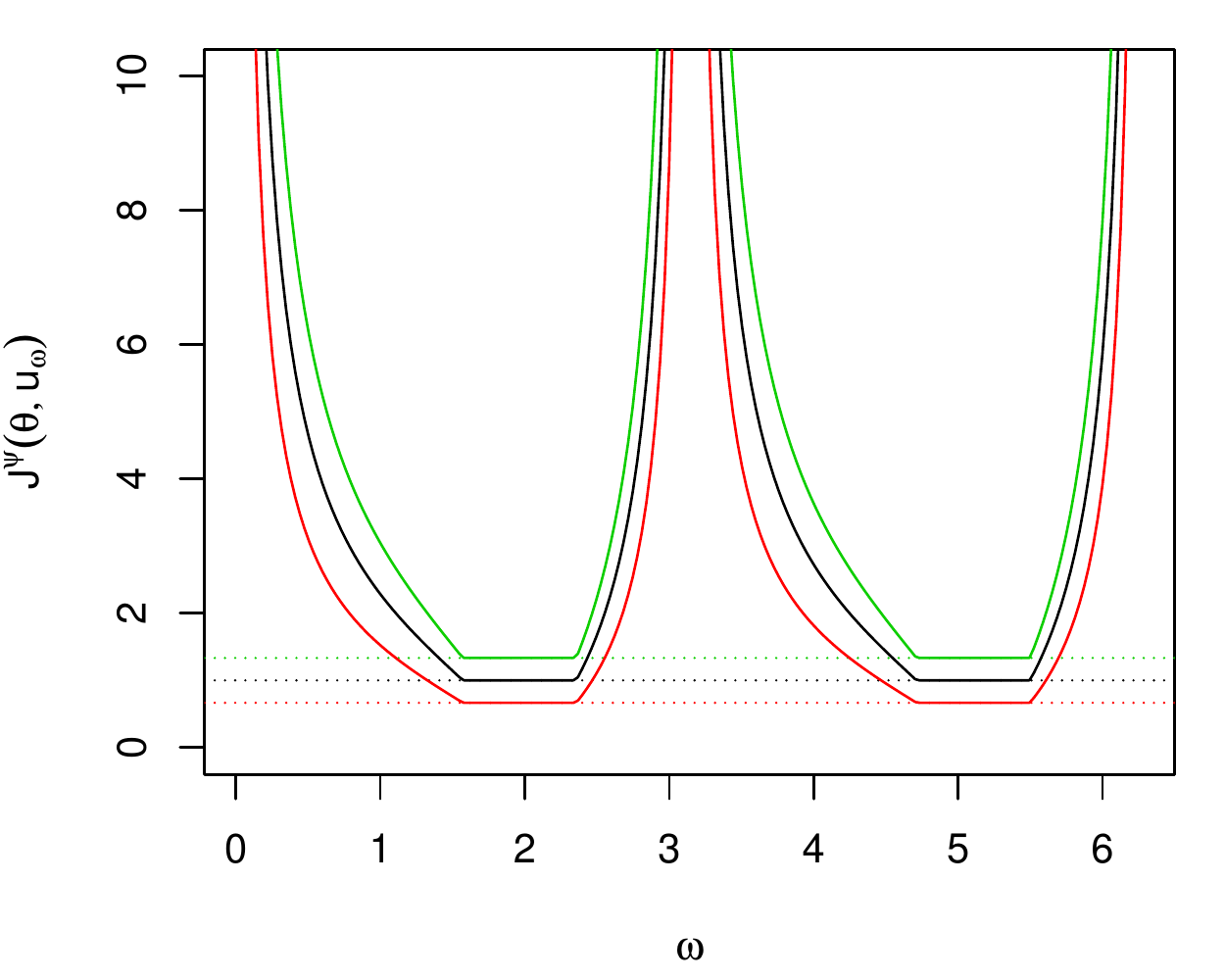}}}
\caption{Plots of the Hellinger information for the two-dimensional uniform example. The black, red, and green lines correspond to $\theta_2=1$, $\theta_2=1.5$ and $\theta_2=0.75$, respectively.}
\label{fig:Jplot}
\end{figure}

\subsection*{S2. Remaining proofs}

\subsubsection*{S2.1. Proof of Proposition~1}

Let $f(y)$ be a function such that $f(0)=c$ and the density $p_0$ in Equation~(1) of the main text satisfies 
 \[ p_0(y) = \beta \, f(y) \, y^{\beta - 1}, \quad y > 0. \]
Without loss of generality, consider $\eps > 0$ and small.  For the squared Hellinger distance, we have 
\begin{align*}
h(\theta, \theta + \eps) & = \int_{-\infty}^\infty \{ p_{\theta+\eps}^{1/2}(y) - p_\theta^{1/2}(y) \}^2 \,dy \\
& = \int_\theta^{\theta + \eps} p_\theta(y) \, dy + \int_{\theta + \eps}^\infty \{ p_\theta^{1/2}(y) - p_{\theta + \eps}^{1/2}(y) \}^2 \, dy\\
& = \prob_\theta(\theta \leq Y \leq \theta + \eps) + \int_{\theta + \eps}^\infty \{ p_\theta^{1/2}(y) - p_{\theta + \eps}^{1/2}(y) \}^2 \, dy.
\end{align*}
The first term equals $\prob_0(Z \leq \eps)$ and it is easy to see that this is $f(0) \eps^\beta + o(\eps^\beta)$.  For the second term, we make a change of variable, $z=y-\theta-\eps$, so that 
\[ \int_{\theta + \eps}^\infty \{ p_\theta^{1/2}(y) - p_{\theta + \eps}^{1/2}(y) \}^2 \, dy = \int_0^\infty \{p_0^{1/2}(z + \eps) - p_0^{1/2}(z)\}^2 \,dz. \]
If we split up this latter integral as 
\[ \int_0^\Delta \{p_0^{1/2}(z + \eps) - p_0^{1/2}(z)\}^2 \,dz + \int_\Delta^\infty \{p_0^{1/2}(z + \eps) - p_0^{1/2}(z)\}^2 \,dz, \]
for $\Delta > 0$ as in the statement of the proposition, then the right-most integral is $O(\eps^2) = o(\eps^\beta)$ by Equation~(4) in the main text and the dominated convergence theorem.  It remains to investigate the left-most integral, which equals 
\[ \beta \int_0^\Delta \{f(z+\eps)^{1/2}(z+\eps)^{(\beta-1)/2} - f(z)^{1/2} z^{(\beta-1)/2}\}^2 \,dz. \]
We proceed by adding and subtracting $f(z+\eps)^{1/2} z^{(\beta-1)/2}$ inside $\{\cdots\}$, so that the new integrand looks like 
\[ \{f(z+\eps)^{1/2}(z+\eps)^{(\beta-1)/2} - f(z)^{1/2} z^{(\beta-1)/2}\}^2 = \sum_{i=1}^3 I_i(z; \eps), \] 
where 
\begin{align*}
I_1(z; \eps) & = f(z+\eps) \{ (z+\eps)^{(\beta-1)/2} - z^{(\beta-1)/2} \}^2 \\
I_2(z; \eps) & = \{ f(z+\eps)^{1/2} - f(z)^{1/2} \}^2 z^{(\beta-1)/2} \\
I_3(z; \eps) & = 2 f(z+\eps)^{1/2} \{ f(z+\eps)^{1/2} - f(z)^{1/2} \} \{ (z+\eps)^{(\beta-1)/2} - z^{(\beta-1)/2} \}. 
\end{align*}
The second term, $I_2$, is the easiest to deal with, so we take this one first.  Because $f$ is smooth and slowly varying near zero, the mean value theorem says that $f(z+\eps)^{1/2} - f(z)^{1/2} \lesssim \eps$, which implies that 
\[ \int_0^\Delta I_2(z; \eps) \,dz \lesssim \eps^2 \int_0^\Delta z^{(\beta-1)/2} \,dz \lesssim \eps^2 = o(\eps^\beta), \quad \eps \to 0. \]
The third term, $I_3$, is similar.  That is, after applying the mean value theorem to both of the differences in $I_3$, we have that 
\[ \int_0^\Delta I_3(z; \eps) \,dz \lesssim \eps^2 \int_0^\Delta z^{-(3-\beta)/2} \,dz \]
and, since the integral converges, the upper bound is $O(\eps^2) = o(\eps^\beta)$ as $\eps \to 0$.  It remains to deal with the $I_1$ term, namely, 
\[ \beta \int_0^\Delta f(z+\eps) \{(z+\eps)^b - z^b\}^2 \, dz, \quad \text{where} \quad b=\tfrac{\beta-1}{2}. \]
Make a change-of-variable, $w=z/\eps$, so that the above integral becomes 
\[ \beta \eps^\beta \int_0^{\Delta/\eps} f(\eps w + \eps) \{(w + 1)^b - w^b\}^2 \, dw. \]
By the mean value theorem, we have that $\{(w+1)^b - w^b\}^2 \leq \min\{1, b^2w^{2(b-1)}\}$, and, since $2(b-1) = \beta-3 < -1$, this upper bound is integrable over $w \in (0,\infty)$.  Since $f$ is also bounded, it follows from dominated convergence theorem that, as $\eps \to 0$, 
\[ \int_0^{\Delta/\eps} f(\eps w + \eps) \{(w + 1)^b - w^b\}^2 \, dw \to f(0) \int_0^\infty \{(w + 1)^b - w^b\}^2 \, dw. \]
The integral on the right-hand side is to be recognized as $r(\beta)$ in Equation~(5) of the main text.  Putting everything together, we have 
\[ h(\theta, \theta + \eps) = f(0)\{1 + \beta \, r(\beta)\} \eps^\beta + o(\eps^\beta), \]
which implies that the regularity index is $\alpha=\beta$ and the Hellinger information is 
\[ J(\theta) := \lim_{\eps \to 0} \frac{h(\theta, \theta + \eps)}{\eps^\alpha} = f(0) \{1 + \beta \, r(\beta)\}.  \]

\subsubsection*{S2.2. Proof of Corollary~1}

For notational simplicity, write $\I=\I_n(\theta)$ and $D=D_\psi(\theta)$.  Then, for the case $\alpha=2$, it is easy to check that the lower bound in (9) from Theorem 1 is proportional to 
\[ \sup_{u: \|u\|=1} \frac{u^\top D^\top D u}{u^\top \I u}. \]
Let $\I = E L E^\top$ be the spectral decomposition of $\I$, and let $M=DEL^{-1/2}$; note that $M$ is $q \times d$ of rank $q \leq d$.  Then the above display equals 
\[ \sup_{v: \|v\|=1} v^\top M^\top M v = \lambda_{\max}(M^\top M). \]
The matrix $M^\top M$ is non-negative definite, in general, with $d-q$ eigenvalues equal to zero.  But the $q$ positive eigenvalues of $M^\top M$ coincide with those of the $q \times q$ positive definite matrix $M M^\top$.  And since 
\[ M M^\top = D E L^{-1/2} L^{-1/2} E^\top D^\top = D \I^{-1} D^\top, \]
it follows that $\lambda_{\max}(M^\top M) = \lambda_{\max}(MM^\top) = \lambda_{\max}(D \I^{-1} D^\top)$.

\subsubsection*{S2.3 \quad Proof of Proposition~3}

Let $\tilde h(\eta,\eta') = H^2(q_\eta, q_{\eta'})$, and recall that $\eta = g(\theta)$, a function of $\theta$.  Since $g$ is smooth, a small change in $\theta$ corresponds to a small change in $\eta$.  In particular, 
\[ g(\theta + \eps u) - g(\theta) = \eps \dot g(\theta)^\top u + o(\eps), \quad \eps \to 0. \]
Call the above difference $\delta$, so that $h(\theta; \theta + \eps u) = \tilde h(\eta, \eta + \delta)$ for small $\eps$.  Then 
\[ \frac{h(\theta; \theta + \eps u)}{|\eps|^\alpha} = \frac{\tilde h(\eta, \eta + \delta)}{|\eps|^\alpha} = \frac{\tilde h(\eta, \eta + \delta)}{|\delta|^\alpha} \, |\dot g(\theta)^\top u|^\alpha + o(1). \]
Now let $\eps \to 0$ and, therefore, $\delta \to 0$, and recall the definition of $\tilde J(\eta)$.  This proves that $J(\theta; u) = |\dot g(\theta)^\top u|^\alpha \tilde J(g(\theta))$ as was to be shown.

\subsubsection*{S2.4. Proof of Proposition~4}

Theorem~2 in the paper says that the optimal design must be symmetric.  So it suffices to show that the two-point symmetric design with points on the boundary, $\{(0.5,-A),(0.5,A)\}$, has information at least as big as the maximum among the symmetric designs.  That is, we intend to show that 
\begin{equation}\label{prop4:0}
     \J_{\{(0.5,-A),(0.5,A)\}}(\theta) \geq \max_{\xi \in \Xi_{\mathrm{sym}}} \J_{\xi}(\theta).
\end{equation}

Denote $\J_{\{(0.5, \pm A)\}}(\theta; u)$ as the Hellinger information of $\theta$ in the direction of $u$ based on design $\{(0.5,-A),(0.5,A)\}.$ For non-regular linear model, $g(\theta;x)=\theta_0+\theta_1x,$ the expressions for $\J_{\{(0.5, \pm A)\}}(\theta; u)$ and $\J_{\xi^\dagger}(\theta; u)$ for any symmetric design, $\xi^\dagger \in \Xi_{\mathrm{sym}}$, are listed below:
\begin{align*}
   \J_{\{(0.5, \pm A)\}}(\theta; u)&=0.5(|u_1+u_2A|^{\alpha}+|u_1-u_2A|^{\alpha}),\\ \J_{\xi^\dagger}(\theta; u)= &\sum_{i=1}^{m} w_i(|u_1+u_2x_i|^{\alpha}+|u_1-u_2x_i|^{\alpha}).
\end{align*}
Assuming that,
\begin{equation}\label{prop4:1}
    J_{\{(0.5, \pm A)\}}(\theta; u)\geq J_{\xi^\dagger}(\theta; u), \quad \text{for all unit vectors $u$ and all $\xi^\dagger\in\Xi_{\mathrm{sym}}$},
\end{equation}
let $\tilde u=\arg\min_{u:\|u\|=1} J_{(0.5,\pm A)}(\theta; u)$; then following from \eqref{prop4:1}, for any $\xi^\dagger,$
 \[ \underset{u:\|u\|_2=1}{\min}J_{(0.5,\pm A)}(\theta; u)=J_{(0.5,\pm A)}(\theta; \tilde u)\geq J_{\xi^\dagger}(\theta; \tilde u)\geq  \underset{u:\|u\|_2=1}{\min}J_{\xi^\dagger}(\theta; \tilde u).\]
Consequently, $\min_u J_{(0.5,\pm A)}(\theta; u) \geq \max_{\xi^\dagger} \min_u  J_{\xi^\dagger}(\theta; u),$ which implies \eqref{prop4:0}. 

To complete the proof, we only need to establish \eqref{prop4:1}.  Towards this, 
\begin{align*}
J_{\{(0.5, \pm A)\}}&(\theta; u)  -J_{\xi^\dagger}(\theta; u)\\
&= 0.5(|u_1+u_2A|^{\alpha}+|u_1-u_2A|^{\alpha})-\sum_{i=1}^{m} w_i(|u_1+u_2x_i|^{\alpha}+|u_1-u_2x_i|^{\alpha})\\
& = \sum_{i=1}^{m} w_i(|u_1+u_2A|^{\alpha}+|u_1-u_2A|^{\alpha}-|u_1+u_2x_i|^{\alpha}-|u_1-u_2x_i|^{\alpha})
\end{align*}
To evaluate the above expression, first see that one can rewrite any unit vector, except $u=(1,0)$,\footnote {The case for $u=(1,0)$ can be ignored, since $J_{\xi}(\theta; (1,0))$ are the same for all $\xi\in\Xi$.} in the following format: 
\begin{equation}\label{u-B}
u=\pm(\pm\frac{B}{\sqrt{1+B^2}},\frac{1}{\sqrt{1+B^2}}), \quad B\in[0,\infty).
\end{equation}
Notice that no matter what choices of sign combination of $u_1, u_2$ is given, 
\[|u_1+u_2x_i|^{\alpha}+|u_1-u_2x_i|^{\alpha}=(1+B^2)^{-0.5\,\alpha}(|B+x_i|^{\alpha}+|B-x_i|^{\alpha}),\]
then, 
\begin{align*}
&J_{\{(0.5, \pm A)\}}(\theta; u)-J_{\xi^\dagger}(\theta; u)&\\
=&(1+B^2)^{-0.5\,\alpha}\sum_{i=1}^{m} w_i(|B+A|^{\alpha}+|B-A|^{\alpha}-(|B+x_i|^{\alpha}+|B-x_i|^{\alpha})).
\end{align*}
Based on the expression above, to see if $J_{\{(0.5, \pm A)\}}(\theta; u)-J_{\xi^\dagger}(\theta; u)$ is non-negative or not for all $u,\xi^\dagger$ boils down to checking the sign of  $\sum_{i=1}^{m} w_i(|B+A|^{\alpha}+|B-A|^{\alpha}-(|B+x_i|^{\alpha}+|B-x_i|^{\alpha}))$ for any $0\leq x_1,..,x_m\leq A,B\in[0,\infty).$

The following shows that $|B+A|^{\alpha}+|B-A|^{\alpha}-(|B+x_i|^{\alpha}+|B-x_i|^{\alpha})$ is non-negative for all possible cases (three cases in total) defined by relationships between $x_i, A, B$ in location:  $ x_i\leq A\leq B $,  $ B\leq x_i\leq A$ and  $x_i\leq B\leq A $. 

\begin{itemize}
\item Case 1, $0 \leq x_i\leq A\leq B $, 
\[|B+A|^{\alpha}+|B-A|^{\alpha}-(|B+x_i|^{\alpha}+|B-x_i|^{\alpha})
=(B+A)^{\alpha}+(B-A)^{\alpha}-(B+x_i)^{\alpha}-(B-x_i)^{\alpha}\]

When $\alpha=1$,  
\[(B+A)+(B-A)-(B+x_i)-(B-x_i)=2B-B-B=0.\]
When $\alpha>1$, function $f_{1}(x)=(B+x)^{\alpha}+(B-x)^{\alpha}$ is an increasing function, since its first derivative is always positive when $B>x,$
\[\frac{\partial f_1(x)}{\partial x}=\alpha[(B+x)^{\alpha-1}-(B-x)^{\alpha-1}]>0.\]

Then, for any $x_i, 0\leq x_i\leq A$, $f_1(A)-f_1(x_i)\geq 0$, i.e.
\[(B+A)^{\alpha}+(B-A)^{\alpha}-(B+x_i)^{\alpha}-(B-x_i)^{\alpha}\geq0, \text{ for all } i=1,..,m.\]
\item Case 2: $0 \leq B\leq x_i\leq A$
\[|B+A|^{\alpha}+|B-A|^{\alpha}-(|B+x_i|^{\alpha}+|B-x_i|^{\alpha})
=(B+A)^{\alpha}+(A-B)^{\alpha}-(B+x_i)^{\alpha}-(x_i-B)^{\alpha}\]
When $\alpha=1$, 
\[(B+A)+(A-B)-(B+x_i)-(x_i-B)=2A-2x_i\geq 0.\]
When $\alpha>1$, function $f_2(x)=(B+x)^{\alpha}+(x-B)^{\alpha},  0\leq B<x$, is an increasing function since it's first derivative is always positive,  
\[\frac{\partial f_2(x)}{\partial x}=\alpha[(B+x)^{\alpha-1}+(x-B)^{\alpha-1}]>0 .\]

Since $x_i\leq A, f_2(A)-f_2(x_i)\geq 0$,  for all i, 

 \[(B+A)^{\alpha}+(A-B)^{\alpha}-(B+x_i)^{\alpha}-(x_i-B)^{\alpha}>0, i=1,...,m,\] 
\item Case 3 When $0 \leq x_i\leq B\leq A $,
\begin{align}
&|B+A|^{\alpha}+|B-A|^{\alpha}-(|B+x_i|^{\alpha}+|B-x_i|^{\alpha})\nonumber \\ 
=&(B+A)^{\alpha}+(A-B)^{\alpha}-(B+x_i)^{\alpha}-(B-x_i)^{\alpha}\nonumber \\ 
=&(B+A)^{\alpha}-(B+x_i)^{\alpha}+(A-B)^{\alpha}-(B-x_i)^{\alpha}.\label{Lem2-c3}
\end{align}

When $\alpha=1$, $(B+A)-(B+x_i)+(A-B)-(B-x_i)=2A-2B \geq 0$.

When $\alpha>1$, if $A-B\geq B-x_i\geq 0$, then $(A-B)^{\alpha}-(B-x_i)^{\alpha}\geq0
$, so (\ref{Lem2-c3}) is non-negative. 

When $\alpha>1$, if $0\leq A-B<B-x_i$, then $(A-B)^{\alpha}-(B-x_i)^{\alpha}<0$. Let 
$A-B=d_m, B-x_i=d_i$.
Notice that this assumption means $0\leq  d_m<d_i$. Set 
\[B+x_i=W, \text{ then }B+A=x_i+d_i+B+d_m=W+d_i+d_m.\]  
Consider $f_3(x)=(x+y)^{\alpha}-x^{\alpha}-y^{\alpha}, y>0, x\geq 0$, $f_3(x)$ is an increasing function, as its first derivative is positive, 
\[f_3'(x)=\alpha(x+y)^{\alpha-1}-\alpha x^{\alpha-1}>0.\]
Also notice that $f_3(0)=0$, so $f_3(x)$ is a non-negative function. 

Therefore, due to $w>0, d_i>0$, \[(W+d_i+d_m)^{\alpha}-(W)^{\alpha}-(d_i+d_m)^{\alpha}>0 \text{ and }(d_i+d_m)^{\alpha}-d_i^{\alpha}-d_m^{\alpha}>0.\]
Therefore, when $0\leq A-B<B-x_i$
\begin{align*}
&(B+A)^{\alpha}-(B+x_i)^{\alpha}+(A-B)^{\alpha}-(B-x_i)^{\alpha}\\
=&(W+d_i+d_m)^{\alpha}-(W)^{\alpha}+(d_m)^{\alpha}-(d_i)^{\alpha} \\
> &(d_i+d_m)^{\alpha}+ (d_m)^{\alpha}-(d_i)^{\alpha} \\
>&d_i^{\alpha}+d_m^{\alpha}+ (d_m)^{\alpha}-(d_i)^{\alpha}\\
\geq&0.
\end{align*}
\end{itemize}

In summary of all three cases, no matter where $B$ is in relation to $x_i$ and $A$, \[|B+A|^{\alpha}+|B-A|^{\alpha}-(|B+x_i|^{\alpha}+|B-x_i|^{\alpha})\geq0\text{ for all i=1,...,m}.\]

\subsection*{S2.5. Proof of Proposition~5}

Theorem~2 says that optimal design for the quadratic model must be a symmetric design, so here we only need to search among the collection of symmetric designs. 

Given any symmetric design 
\[ \xi^\dagger=\{(w_1,-x_1),...(w_m,-x_m),(w_1,x_1),...(w_m,x_m)\} \]
and direction vector $u$, for the non-regular quadratic regression model, the Hellinger information of $\xi^\dagger$ in the direction of $u=(u_1,u_2,u_3)$ has the expression of  
$J_{\xi^\dagger}(\theta; u)= \sum_{i=1}^{m} w_i(|u_1+u_2x_i+u_3x^2_i|^{\alpha}+ |u_1+u_2(-x_i)+u_3x^2_i|^{\alpha}). $ For simplicity, denote  $f_u(x)=u_1+u_2x+u_3x^2$;  then, when $\alpha=1$, the above becomes \[J_{\xi^\dagger}(\theta; u)= \sum_{1}^{m} w_i( |f_u(x_i)|+|f_u(-x_i)|).\]

Let's assume that there exists $r_i\in[0,1]$ such that, for all $x_i\in[-A,A]$, the following relation is true:
\begin{equation} \label{Obj.Quada1}
2r_i|f_u(0)|+(1-r_i)|f_u(A)|+(1-r_i)|f_u(-A)|>|f_u(x_i)|+|f_u(-x_i)|. 
\end{equation}
Then, given $w_i$, after multiplying $w_i$ on both sides of the inequality (\ref{Obj.Quada1}), 
we have 
\begin{equation}\label{Q1}
2w_{i}r_i|f(0)|+w_{i}(1-r_i)|f(A)|+w_{i}(1-r_i)|f(-A)|
\geq w_i(|f(x_i)|+ |f(-x_i)|).
\end{equation}

Let $w=\sum_{i=1}^{m}(1-r_i)w_i$, based on the fact that $\sum_{i=1}^{m}w_i=0.5, 1-2w= \sum_{i=1}^{m}2w_ir_i$. We can denote a three-point symmetric design based on the left hand side of (\ref{Q1}) as 
\[\xi_{w}=\{(w,-A),(1-2w,0),(w,A)\}, 0\leq w\leq 0.5.\] 
Hellinger information based on design $\xi_{w}$ in the direction of a given $u$ has the expression 
\[J_{\xi_{w}}(\theta; u)=(1-2w)|f(0)|+w|f(A)|+w|f(-A)|.\]
Thus, based on (\ref{Q1}), for any $u$, for any symmetric design $\xi^\dagger$, there is a $w$ such that  
\begin{equation*}
J_{\xi_{w}}(\theta; u)\geq J_{\xi^\dagger}(\theta; u).
\end{equation*}
Now, via Theorem~2 and the exact same argument that established Proposition~4, the conclusion of this proposition holds.

The only step we need in order to complete the proof is to show (\ref{Obj.Quada1}) is true. Notice that $|f_u(x)|=|f_{-u}(x)|$, i.e. $|u_1+u_2x+u_3x^2|=|-u_1-u_2x-u_3x^2|$. Thus, for every given $\bar u$ with $\bar u_3<0$, 
there is a $\dot u=-\bar u$ such that $|f_{\dot u}(x)|=|f_{\bar u}(x)|$, and  $f_{\dot u}(x)$ is convex. Thus, for simplicity, the following only shows (\ref{Obj.Quada1}) is true for $f_u(x)$ with $u_3>0$, i.e.  only when $f_u(x)$ is convex. There are seven cases based on the locations of x-intercepts of $f_u(x)$, and for each case,  (\ref{Obj.Quada1}) can be shown to be true. Here we only consider cases in which $u$ is such that its $u_3\neq0$, as the case for  $u_3=0$ is equivalent to the linear regression case. In the rest of the proof, for simplicity,  let $f(x)\equiv f_u(x)$. 

By convexity, if $f(x_i)>0$ over $[-B,B]$ for some $B>0$ and $x_i\in[0,B]$, and there is a $r_i\in(0,1)$, such that $x_i=r_i 0+(1-r_i)B$, and
\[r_if(0)+(1-r_i)f(B)>f(x_i)\text{ and } r_if(0)+(1-r_i)f(-B)>f(-x_i), \] then 
\begin{equation}\label{CV}
2r_i|f(0)|+(1-r_i)|f(B)|+(1-r_i)|f(-B)|>|f(x_i)|+|f(-x_i)|. 
\end{equation}

Given direction vector $u$ and design point location $-x_i,x_i$, with $x_i>0$ and the assumption that $u_3>0$, there are seven cases that describe the possible relationships between $-x_i,x_i$ and the left, right roots of $f(x)$, $x_L<x_R$.
\begin{itemize}
\item Case 1: $x_i<x_L,x_R$, 
\item Case 2: $x_L,x_R<-x_i$
\item Case 3: $-x_i\leq x_L,x_R\leq x_i$ 
\item Case 4: $x_L\leq -x_i , x_i\leq x_R$ 
\item Case 5: $-x_i\leq x_L \leq x_i\leq x_R$ 
\item Case 6: $x_L\leq -x_i \leq x_R\leq x_i$
\item Case 7: There is at most one root for $f(x)$, i.e. $f(x)\geq0$ for all $x\in R$
\end{itemize}

The following goes through these cases and shows that (\ref{Obj.Quada1}) is true for each of them. Notice that cases 1 and 2 are equivalent, and cases 5 and 6 are equivalent. So we shall focus on cases 1, 3, 4, 5, and 7.

\begin{itemize}
\item  In case 1 both roots are above $x_i$; there are two possible ways that this can happen regarding the given value of A: 
\begin{itemize}
\item 1.1) The left root $x_L$ is above A, i.e. $ A\leq x_L$. This implies that $f(x_i)>0$ over $[-A,A]$, so by the argument of convexity in (\ref{CV}), (\ref{Obj.Quada1}) is true.

\item 1.2) The left root $x_L$ is below A, i.e. $ x_L< A$. Here, $f(-x_i),f(x_i),f(-A)>0$, which implies that 
\begin{equation}\label{Case1.2.1}
|f(-x_i)|+|f(x_i)|=2u_1+2u_3x^2, \text{ and } f(-A)=u_1-u_2A+u_3A^2.
\end{equation}
If $A$ is smaller than right root, $A<x_R$, then $f(A)<0$, so

$|f(A)|=-u_1-u_2A-u_3A^2>0$, and $-u_2A>u_1+u_3A^2$. Then with (\ref{Case1.2.1}),  
\[|f(A)|+|f(-A)|=-2u_2A>2u_1+2u_3A^2>2u_1+2u_3x_i^2=|f(-x_i)|+|f(x_i)|.\]

If $A$ is larger than right root, $A>x_R$, then $f(A)>0$, so

$|f(A)|=u_1+u_2A+u_3A^2>0$. Then with (\ref{Case1.2.1}),
\[|f(A)|+|f(-A)|=2u_1+2u_3A^2>2u_1+2u_3x_i^2=|f(-x_i)|+|f(x_i)|.\]
Then for 1.2) one can find a ratio $r_A$ such that $r_A(|f(A)|+f(-A))>f(x_i)+f(-x_i)$,  letting $r_i=1-r_A$, then (\ref{Obj.Quada1}) is true, i.e. 
\[2r_i|f(0)|+(1-r_i)|f(A)|+(1-r_i)|f(-A)|>|f(x_i)|+|f(-x_i)|. \]
\end{itemize}

\item Case 3: $-x_i\leq x_L,x_R\leq x_i$, is the case of both roots of $f(x)$ are in $[-x_i,x_i]$, so $f(x)$ would be positive and increasing over $[x_i,A]$, while positive and decreasing over $[-A,-x_i]$, i.e. \[f(A)>f(x_i)>0,\quad f(-A)>f(-x_i)>0,\] 


Let $r_i=1-r_A$, then, under $\alpha=1$,  (\ref{Obj.Quada1}) is true, i.e., 
\[2r_i|f(0)|+(1-r_i)|f(A)|+(1-r_i)|f(-A)|>|f(x_i)|+|f(-x_i)|. \]
\item Case 4: $x_L\leq -x_i , x_i\leq x_R$. In this case,  $f(x)\leq 0$ over $[-x_i,x_i]$, which means $|f(x)|=-f(x)=-u_1-u_2x-u_3x^2$ is concave over  $[-x_i,x_i]$. Thus, $|f(0)|>\frac{1}{2}|f(x_i)|+\frac{1}{2}|f(-x_i)|$, and consequently,  (\ref{Obj.Quada1}) holds. 

\item Case 5: $-x_i\leq x_L \leq x_i\leq x_R$.

First, the assumption of case 5, $-x_i\leq x_L \leq x_i\leq x_R$, implies that $\frac{-u_2}{2u_3}=\frac{x_L+x_R}{2}>\frac{-x_i+x_i}{2}=0$, i.e. $u_2<0$. 

Also notice that $-x_i\leq x_L$ implies that $0<f(-x_i)<f(-A)$ and 
\begin{equation}\label{case5.c1}
|f(-A)|=u_1-u_2A+u_3A^2,\quad |f(-x_i)|=u_1-u_2x_i+u_3x_i^2.
\end{equation}
Based on the set up of case 5, and the possible relations of $A$ and direction $u$, the expression of $f(x_i)$ and $f(A)$ depends on the following two sub-cases: 

\begin{itemize}
\item The right boundary $A$ is below right intercept, i.e. $A<x_R$, i.e.
$f(x_i)<0$ and $f(A)<0$, so
\[|f(A)|=-f(A)=-u_1-u_2A-u_3A^2,\text{ and } |f(x_i)|=-f(x_i)=-u_1-u_2x_i-u_3x_i^2.\]
Therefore, with the fact that $-u_2> 0$, $A\geq x_i$, and (\ref{case5.c1}), we have 
\begin{align*}
&|f(-A)|+|f(A)|-|f(-x_i)|-|f(x_i)|\\
=&-2u_2A+2u_2x_i\\
=&-2u_2(A-x_i)\\
\geq& 0.
\end{align*}

\item The right boundary $A$ is above right intercept, i.e.  $x_R<A$, 
 which implies that $f(x_i)<0<f(A)$, 
\[|f(A)|=u_1+u_2A+u_3A^2,\text{ and } |f(x_i)|=-f(x_i)=-u_1-u_2x_i-u_3x_i^2.\]
Therefore, with $-u_2>0$, $A\geq x_i$, and (\ref{case5.c1})  
\begin{align*}
&|f(-A)|+|f(A)|-|f(-x_i)|-|f(x_i)|\\
=&2u_1+2u_3A^2+2u_2x_i\\
=&2(u_1+u_2A+u_3A^2)-u_2(A-x_i)\\
=&2|f(A)|-u_2(A-x_i)\\
\geq&0.
\end{align*}

\end{itemize}
Combining these two sub-cases, we can conclude that under case 5, 
\[|f(-A)|+|f(A)|\geq |f(-x_i)|+|f(x_i)|.\]
Then one can find a ratio $r_A$ such that $r_A(|f(A)|+f(-A))>f(x_i)+f(-x_i)$,  letting $r_i=1-r_A$, then (\ref{Obj.Quada1}) is true, as 
\[2r_i|f(0)|+(1-r_i)|f(A)|+(1-r_i)|f(-A)|>|f(x_i)|+|f(-x_i)|. \]
\item Case 7: There is at most one root, which means, $f(x)\geq0$ for all $x\in [-A,A]$. Thus, by the argument in (\ref{CV}), implies (\ref{Obj.Quada1}).


\end{itemize}
In summary of these all seven cases, (\ref{Obj.Quada1}) holds.

\subsubsection*{S2.6. Proof of Lemma~1}

Define the mean function of the estimator $T$, i.e., $m_\psi(\theta) = E_\theta(T)$.  Since integration of a constant function with respect to the (signed) measure with density $p_\theta - p_\vartheta$ is zero, we have the following identity:
\[ m_\psi(\theta) - m_\psi(\vartheta) = \int \bigl[ T(y) - \tfrac12\{m_\psi(\theta) + m_\psi(\vartheta)\} \bigr] \bigl[ p_\theta(y) - p_\vartheta(y) \bigr] \, \mu(dy). \]
Write $v_{\theta,\vartheta}(y)=T(y) - \tfrac12\{m_\psi(\theta) + m_\psi(\vartheta)\}$.  Now bound the norm of the quantity in the above display:
\begin{align*}
\|m_\psi(\theta)-m_\psi(\vartheta)\| & = \Bigl\| \int v_{\theta,\vartheta} (p_\theta - p_{\vartheta}) \,dy \Bigr\| \\
& \leq \int \|v_{\theta,\vartheta}\| \, |p_\theta^{1/2} + p_{\vartheta}^{1/2}| \, | p_\theta^{1/2}-p_{\vartheta}^{1/2}| \,dy.
\end{align*}
Next, apply the Cauchy--Schwartz inequality, to get 
\[ \|m_\psi(\theta)-m_\psi(\vartheta)\|^2 \leq \int \|v_{\theta,\vartheta}\|^2 \, |p_\theta^{1/2} + p_{\vartheta}^{1/2}|^2 \,dy \cdot h(\theta;\vartheta). \]
For two non-negative numbers $a$ and $b$, we have $(a+b)^2 \leq 2(a^2 + b^2)$, so the first term in the above upper bound is itself bounded by 
\[ 2 \int \|v_{\theta,\vartheta}\|^2 p_\theta \,dy + 2 \int \|v_{\theta,\vartheta}\|^2 p_{\vartheta} \,dy. \]
If we rewrite $v_{\theta,\vartheta}$ as 
\[ v_{\theta, \vartheta}(y) = \{T(y) - m_\psi(\theta)\} + \tfrac12\{ m_\psi(\vartheta) - m_\psi(\theta)\}, \]
and use the fact that $\int \{T - m_\psi(\theta)\} p_\theta \,dy = 0$, then we get 
\[ \int \|v_{\theta,\vartheta}\|^2 \, p_\theta \,dy \leq R_\psi(T, \theta) + \tfrac14\|m_\psi(\theta) - m_\psi(\vartheta)\|^2. \]
An analogous bound holds for $\int \|v_{\theta,\vartheta}\|^2 \, p_\vartheta \,dy$, yielding the expression 
\[ \|m_\psi(\theta)-m_\psi(\vartheta)\|^2 \leq 2 h(\theta; \vartheta) \bigl\{R_\psi(T,\theta) + R_\psi(T,\vartheta) + \tfrac12\|m_\psi(\theta) - m_\psi(\vartheta)\|^2 \bigr\}. \]

Rearranging terms gives the bound 
\[ R_\psi(T,\theta) + R_\psi(T,\vartheta) \geq \frac{1-h(\theta;\vartheta)}{2h(\theta; \vartheta)} \|m_\psi(\theta) - m_\psi(\vartheta)\|^2. \]
Finally, write $b_\psi(\theta) = m_\psi(\theta) - \psi(\theta)$ for the bias function of $T$, and consider the following two exhaustive cases based on the magnitude of the bias:
\begin{itemize}
\item Suppose that $\max\{|b_\psi(\theta)|, |b_\psi(\vartheta)|\} < \frac14 \|\psi(\theta) - \psi(\vartheta)\|$.  Then it follows from the triangle inequality that 
\[ \|m_\psi(\theta) - m_\psi(\vartheta)\| = \|\psi(\theta) - \psi(\vartheta) + b_\psi(\theta) - b_\psi(\vartheta)\| \geq \tfrac12 \| \psi(\theta) - \psi(\vartheta)\|. \]
\item Next, suppose that, say, $\|b_\psi(\theta)\| \geq \frac14\|\psi(\theta) - \psi(\vartheta)\|$.  Then we trivially have $R_\psi(T,\theta) \geq \|b_\psi(\theta)\|^2$ and, therefore, $R_\psi(T, \theta) + R_\psi(T, \vartheta) \geq \tfrac{1}{16} \|\psi(\theta) - \psi(\vartheta)\|^2$.  
\end{itemize}
Putting these two cases together proves the claim.


\end{document}